\documentclass[a4paper,12pt]{article}
\usepackage{ucs}
\usepackage{amsfonts, amssymb, amsmath, amsthm, amscd}
\usepackage{graphicx}
\usepackage{cite}
\textheight=220mm \textwidth=150mm \topmargin=-0.5in
\oddsidemargin=0in \evensidemargin=0in

\ifx\undefined \pdfgentounicode \else
\input{glyphtounicode} \pdfgentounicode=1
\fi

\author{A.A. Vasil'eva}
\title{Estimates for the Gelfand widths of intersections of finite-dimensional balls}
\date{}
\begin{document}

\maketitle

\newenvironment{Biblio}{%
                  \renewcommand{\refname}{\footnotesize REFERENCES}%
                  }

\def\inff{\mathop{\smash\inf\vphantom\sup}}
\renewcommand{\le}{\leqslant}
\renewcommand{\ge}{\geqslant}
\newcommand{\sgn}{\mathrm {sgn}\,}
\newcommand{\inter}{\mathrm {int}\,}
\newcommand{\dist}{\mathrm {dist}}
\newcommand{\supp}{\mathrm {supp}\,}
\newcommand{\R}{\mathbb{R}}
\newcommand{\Z}{\mathbb{Z}}
\newcommand{\N}{\mathbb{N}}
\newcommand{\Q}{\mathbb{Q}}
\theoremstyle{plain}
\newtheorem{Trm}{Theorem}
\newtheorem{trma}{Theorem}

\newtheorem{Def}{Definition}
\newtheorem{Cor}{Corollary}
\newtheorem{Lem}{Lemma}
\newtheorem{Rem}{Remark}
\newtheorem{Sta}{Proposition}

\renewcommand{\proofname}{\bf Proof}
\renewcommand{\thetrma}{\Alph{trma}}

\begin{abstract}
In this paper, we obtain order estimates for the Gelfand widths of intersections of finite-dimensional balls under some conditions on parameters.
\end{abstract}

\section{Introduction}

In this paper, we consider the problem of estimating the Gelfand $n$-widths of an intersection of finite-dimensional balls for $n\le aN$, where $a\in (0, \, 1/2]$ is an absolute constant. The similar problem for the Kolmogorov widths was considered in \cite{galeev1, vas_ball_inters}. For some conditions on the parameters, we obtain the order estimates for the Gelfand widths; as a consequence, under some conditions on the parameters, we get the order estimates for linear widths.

Let us give the definitions of the Gelfand, Kolmogorov and linear widths.

Let $X$ be a linear normed space. By $X^*$ we denote the dual space for $X$, by ${\cal L}_n(X)$, the family of all linear subspaces in $X$ of dimension at most $n$, by $L(X, \, X)$, the space of linear continuous operators on $X$. For $A\in L(X, \, X)$, by ${\rm rk}\, A$ we denote the dimension of the range of $A$. Let $M\subset X$ be a nonempty set, $n\in \Z_+$. The Gelfand, Kolmogorov and linear $n$-widths of the set $M$ in $X$ are defined, correspondingly, as follows: 
$$
d^n(M, \, X) = \inf _{x_1^*, \, \dots, \, x_n^*\in X^*} \sup \Bigl\{\|x\|:\; x\in M\cap \Bigl(\cap _{j=1}^n \ker x_j^*\Bigr)\Bigr\};
$$
$$
d_n(M, \, X) = \inf _{L\in {\cal L}_n(X)} \sup _{x\in M} \inf
_{y\in L} \|x-y\|;
$$
$$
\lambda_n(M, \, X) = \inf _{A\in L(X, \, X), \, {\rm rk}\, A\le n} \sup _{x\in M} \|x-Ax\|.
$$

From the definitions it follows that $d_n(M, \, X)\le \lambda_n(M, \, X)$, $d^n(M, \, X) \le \lambda_n(M, \, X)$.

Let $N\in \N$, $1\le p\le \infty$, $(x_i)_{i=1}^N\in \mathbb{R}^N$. We set $\|(x_i)_{i=1}^N\|_{l_p^N} = \left(\sum \limits _{i=1}^N |x_i|^p\right)^{1/p}$ for $p<\infty$, $\|(x_i)_{i=1}^N\|_{l_p^N} = \max _{1\le i\le N}|x_i|$ by $p=\infty$. The space $\mathbb{R}^N$ with this norm is denoted by $l_p^N$. By $B_p^N$ we denote the unit ball in $l_p^N$.

Given $1\le p\le \infty$, we denote by $p'$ the dual exponent of $p$ defined by the equation $\frac{1}{p}+ \frac{1}{p'}=1$.

The problem of estimating $d_n(B_p^N, \, l_q^N)$, $d^n(B_p^N, \, l_q^N)$ and $\lambda_n(B_p^N, \, l_q^N)$ was studied in \cite{k_p_s, stech_poper, pietsch1, stesin, kashin_oct, bib_kashin, kashin_matr, gluskin1, bib_gluskin, garn_glus}. For $p\ge q$, the sharp values of these widths are known \cite{pietsch1, stesin}, as well as the sharp values of $d_n(B_1^N, \, l_2^N)= \lambda_n(B_1^N, \, l_2^N)$ \cite{k_p_s, stech_poper}. Hence from the duality formulas $d_n(B_p^N, \, l_q^N)=d^n(B_{q'}^N, \, l_{p'}^N)$, $\lambda_n(B_p^N, \, l_q^N)=\lambda_n(B_{q'}^N, \, l_{p'}^N)$ (see \cite{ioffe_tikh}) we get the sharp values of $d^n(B_2^N, \, l_\infty^N)=\lambda_n(B_2^N, \, l_\infty^N)$. Order estimates for $d_n(B_p^N, \, l_q^N)$, $d^n(B_p^N, \, l_q^N)$ and $\lambda_n(B_p^N, \, l_q^N)$ are obtained, correspondingly, for $\{1\le p\le q\le \infty:\; q<\infty\; \text{or }(q=\infty,\; p\ge 2)\}$, $\{1\le p\le q\le \infty:\; p>1\, \text{or }(p=1,\; q\le 2)\}$, $\{1\le p\le q\le \infty:\; (p, \, q)\ne (1, \, \infty)\}$ (see \cite{bib_kashin, kashin_matr, gluskin1, bib_gluskin, garn_glus}).

In \cite{galeev1} order estimates for $d_n(\cap _{\alpha\in A}\nu_\alpha B^N_{p_\alpha}, \, l_q^N)$ were obtained for $N=2n$ (here $\nu_\alpha>0$, $\alpha\in A$). In \cite{vas_ball_inters}, this result was generalized for the case $n\le N/2$; after that, the similar problem for mixed norms was studied \cite{vas_mix_sev}.

The order estimates of $d_n(\cap _{\alpha\in A}\nu_\alpha B^N_{p_\alpha}, \, l_q^N)$ (where $n\le N/2$) were obtained for $1\le q<\infty$ and arbitrary $p_\alpha$, as well as for $q=\infty$ and $p_\alpha\ge 2$, $\alpha\in A$. Also some simple cases were mentioned, when $\lambda_n(\cap _{\alpha\in A}\nu_\alpha B^N_{p_\alpha}, \, l_q^N)$ and $d^n(\cap _{\alpha\in A}\nu_\alpha B^N_{p_\alpha}, \, l_q^N)$ can be similarly estimated \cite[Remark 1]{vas_ball_inters}. In particular, for the Gelfand widths the order estimates can be easily written if 1) $p_\alpha\ge q$, $\alpha\in A$, 2) $2\le p_\alpha\le q$, $\alpha\in A$; they are given, correspondingly, by formulas $\inf _{\alpha\in A} \nu_\alpha N^{1/q-1/p_\alpha}$ and $\inf _{\alpha\in A} \nu_\alpha$.

In this paper we consider two more areas of parameters, where we could obtain order estimates of $d^n(\cap _{\alpha\in A}\nu_\alpha B^N_{p_\alpha}, \, l_q^N)$.

First we consider the case of an intersection of a finite family of balls $\cap _{j=1}^r \nu_j B^N_{p_j}$, when 
\begin{align}
\label{pj_upor}
p_1< p_2< \dots < p_r, 
\end{align}
\begin{align}
\label{ne_vkl}
\nu_1\ge \nu_2\ge \dots \ge \nu_r, \quad \nu_1 N^{-1/p_1}\le \nu_2 N^{-1/p_2} \le \dots \le \nu_r N^{-1/p_r}.
\end{align}
The condition \eqref{pj_upor} can be satisfied by choosing the numeration; if $p_i=p_j$ for some $i\ne j$, the problem can be reduced to the case of less number of balls. If \eqref{ne_vkl} fails, we have $\nu_iB^N_{p_i} \subset \nu_j B^N_{p_j}$ or $\nu_iB^N_{p_i} \supset \nu_j B^N_{p_j}$ for some $i\ne j$, and the problem can be reduced to the case of less number of balls. Hence in the case of an intersection of a finite number of balls it suffices to obtain the estimates under the conditions \eqref{pj_upor}, \eqref{ne_vkl}.

Let $X$, $Y$ be sets, $f_1$, $f_2:\ X\times Y\rightarrow \mathbb{R}_+$.
We write $f_1(x, \, y)\underset{y}{\lesssim} f_2(x, \, y)$ (or
$f_2(x, \, y)\underset{y}{\gtrsim} f_1(x, \, y)$) if for each
$y\in Y$ there is $c(y)>0$ such that $f_1(x, \, y)\le
c(y)f_2(x, \, y)$ for all $x\in X$; $f_1(x, \,
y)\underset{y}{\asymp} f_2(x, \, y)$ if $f_1(x, \, y)
\underset{y}{\lesssim} f_2(x, \, y)$ and $f_2(x, \,
y)\underset{y}{\lesssim} f_1(x, \, y)$.

\begin{Trm}
\label{main1} Let $2\le q\le \infty$, $1<p_1< p_2<\dots < p_r\le q$. Suppose that \eqref{ne_vkl} holds and $p_1<2$. Then for $n\le N/2$
$$
d^n(\cap _{j=1}^r \nu_j B_{p_j}^N, \, l_q^N) \underset{p_1}{\asymp} \min \{\nu_1 n^{-1/2}N^{1/p_1'}, \, \nu_r\}.
$$
\end{Trm}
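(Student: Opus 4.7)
The plan is to establish the upper and lower bounds separately, each arising as the minimum of two independently proved estimates.

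Upper bound. Since $\cap_{j=1}^r \nu_j B_{p_j}^N \subset \nu_r B_{p_r}^N \subset \nu_r B_q^N$, where the second inclusion uses $p_r\le q$ (hence $\|\cdot\|_q\le\|\cdot\|_{p_r}$), we obtain $d^n(\cap_{j=1}^r \nu_j B_{p_j}^N, l_q^N)\le \nu_r$. For the other bound, I would combine $\cap\subset \nu_1 B_{p_1}^N$ with the dominance $\|\cdot\|_q\le\|\cdot\|_2$ for $q\ge 2$, thereby reducing matters to the single-ball estimate
\[
d^n(B_{p_1}^N, l_2^N) \underset{p_1}{\lesssim} n^{-1/2}N^{1/p_1'}, \quad 1<p_1<2,\ n\le N/2.
\]
By the duality $d^n(B_{p_1}^N, l_2^N) = d_n(B_2^N, l_{p_1'}^N)$, this reduces further to the Kolmogorov-width estimate $d_n(B_2^N, l_{p_1'}^N) \underset{p_1}{\lesssim} n^{-1/2}N^{1/p_1'}$ in the regime $2<p_1'<\infty$, which is a known consequence of Gluskin's random-subspace construction. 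The hypothesis $p_1>1$ (keeping $p_1'$ finite) is essential here to avoid the $\sqrt{\log(N/n)}$ loss that appears in the Garnaev--Gluskin limiting case $p_1'=\infty$.

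Lower bound. By monotonicity, $d^n(K,l_q^N)\le d^n(\cap_{j=1}^r \nu_j B_{p_j}^N,l_q^N)$ for any $K\subset \cap_{j=1}^r \nu_j B_{p_j}^N$, so it suffices to exhibit test subsets realizing each of the two terms. Following the scheme of \cite{vas_ball_inters} for the Kolmogorov case, the test set in each regime is a Euclidean or $l_\infty$ ball supported on a coordinate block of size $m\le N$, with $m$ and the radius tuned as functions of $(n,N,p_1,\nu_j)$; the classical lower estimates for $d^n(B_2^m,l_q^m)$ or $d^n(B_\infty^m,l_q^m)$ then yield the desired bounds. Conditions \eqref{pj_upor} and \eqref{ne_vkl} are used precisely to verify that $K$ lies inside every $\nu_j B_{p_j}^N$, including at the intermediate indices $2\le j\le r-1$.

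The principal technical difficulties are twofold. First, the single-ball estimate $d^n(B_{p_1}^N, l_2^N)\underset{p_1}{\lesssim} n^{-1/2}N^{1/p_1'}$ must be proved without a logarithmic loss, which is exactly where the hypothesis $p_1>1$ is exploited. Second, on the lower bound side, one must check that the candidate test sets fit inside the entire intersection — verifying the inclusion at the intermediate indices $j$ is nontrivial and uses both monotonicity conditions in \eqref{ne_vkl} in an essential way. Once the two one-sided estimates are in place in their respective regimes, combining them into the $\min$ formula is routine.
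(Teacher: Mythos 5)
Your upper bound is fine and is essentially the paper's: it amounts to assertion 3 of Theorem \ref{spg_teor}, which is exactly the single-ball estimate you re-derive via the duality $d^n(B_{p_1}^N,l_2^N)=d_n(B_2^N,l_{p_1'}^N)$ and Gluskin's construction. The genuine gap is the lower bound: for Gelfand widths the test-set scheme you import from the Kolmogorov case \cite{vas_ball_inters} provably cannot produce either term of the claimed estimate, and the difficulty is not (as you suggest) verifying inclusions at intermediate indices. First, the classical estimates you invoke, $d^n(B_2^m,l_q^m)\asymp 1$ and $d^n(B_\infty^m,l_q^m)\asymp m^{1/q}$ (valid for $n\le m/2$), contain no $n^{-1/2}$ factor, and any test ball $\rho B_p^m$ with $p\le p_r$ is forced by the inclusions to have $\rho\le\nu_r$ (since $\sup_{x\in B_p^m}\|x\|_{p_j}=1$ for $p_j\ge p$). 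A Euclidean block must satisfy $m\ge 2n$ and $\rho\le\nu_1 m^{1/2-1/p_1}$, so it yields at best $\nu_1 n^{-1/2}n^{1/p_1'}$ --- short of the target $\nu_1 n^{-1/2}N^{1/p_1'}$ by the factor $(N/n)^{1/p_1'}$; an $l_\infty$ block yields $\min_j\nu_j(2n)^{1/q-1/p_j}$, which is also too small since $q\ge 2$. Second, for the term $\nu_r$: the block $\nu_r B_2^m$ lies in $\nu_1 B_{p_1}^N$ only for $m\le(\nu_1/\nu_r)^{2p_1/(2-p_1)}$, so it gives $d^n\gtrsim\nu_r$ only for $n\lesssim(\nu_1/\nu_r)^{2p_1/(2-p_1)}$, whereas the theorem asserts it up to the crossover $n\asymp(\nu_1/\nu_r)^2N^{2/p_1'}$. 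A concrete instance: $r=2$, $p_1=3/2$, $p_2=q=2$, $\nu_2=1$, $\nu_1=N^{1/12}$ (here \eqref{ne_vkl} holds); the theorem gives $d^n\asymp 1$ for all $n\le N^{5/6}$, but the largest admissible Euclidean block has $m\asymp N^{1/2}$, and at $n=N^{3/4}$ both of your candidate test sets give bounds of order $N^{-1/24}\to 0$.

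The paper's lower bound is different in kind: it dualizes, $d^n(\cap_{j}\nu_jB_{p_j}^N,\,l_q^N)=d_n(B_{q'}^N,\,X^*)$ with $\|x\|_X=\max_j\nu_j^{-1}\|x\|_{l_{p_j}^N}$, and runs Gluskin's averaging over signed permutations of the extreme points (Lemmas \ref{qnorm} and \ref{low_est_lem} with $s=1$), where the whole intersection enters through the single quantity $\|I\|_{X^*\to l_2^N}\le\nu_1^{-1}N^{1/p_1-1/2}$; condition \eqref{ne_vkl} is used there to compute $A=\nu_r$ and this norm, not to verify inclusions of a test body. This gives $d^n\gtrsim\nu_r$ in the full range $n^{1/2}\le a(\nu_1/\nu_r)N^{1/p_1'}$ (with a separate reduction via $j_0,j_1$ for indices with $p_j>2$), and the term $\nu_1 n^{-1/2}N^{1/p_1'}$ for larger $n$ is then obtained not from a new test set but by the rescaling substitution $p_j\mapsto\tilde p_j$ of \eqref{til_p_def}--\eqref{1pjtil}, which reduces back to the small-$n$ case. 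To repair your argument you would have to replace the block/test-set step by such a duality-plus-averaging argument; no tuning of $m$ and the radii can close the gap.
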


\begin{Trm}
\label{main2} Let $2\le q\le \infty$, $2< p_1< p_2< \dots < p_r$. Suppose that
\eqref{ne_vkl} holds and $p_1<q<p_r$. Given $p_i\le q$, $p_j\ge q$ we define the numbers $\lambda_{ij}\in [0, \, 1]$ by the equation
\begin{align}
\label{lambda_def} \frac 1q = \frac{1-\lambda_{ij}}{p_i} + \frac{\lambda_{ij}}{p_j}.
\end{align}
Then for $n\le N/4$
$$
d^n(\cap _{j=1}^r \nu_j B_{p_j}^N, \, l_q^N) \asymp \min _{p_i\le q, \, p_j\ge q} \nu_i^{1-\lambda_{ij}} \nu_j^{\lambda_{ij}}.
$$
\end{Trm}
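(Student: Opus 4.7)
The upper bound is immediate from log-convexity of the $\ell_p$-norms (three-line inequality): for every pair $(i,j)$ with $p_i\le q\le p_j$ and every $x\in\R^N$, $\|x\|_q\le\|x\|_{p_i}^{1-\lambda_{ij}}\|x\|_{p_j}^{\lambda_{ij}}$. Applied to $x\in M:=\bigcap_k\nu_kB_{p_k}^N$ this gives $\|x\|_q\le\nu_i^{1-\lambda_{ij}}\nu_j^{\lambda_{ij}}$; minimizing over admissible pairs shows $M\subset VB_q^N$, whence $d^n(M,l_q^N)\le d^0(M,l_q^N)\le V$.

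For the lower bound, fix a minimizing pair $(i^*,j^*)$ and set $m^*:=(\nu_{i^*}/\nu_{j^*})^{1/(1/p_{i^*}-1/p_{j^*})}$. The monotonicity $\nu_{i^*}\ge\nu_{j^*}$ together with \eqref{ne_vkl} force $1\le m^*\le N$. The plan is to exhibit a linear subspace $\tilde L\subset\R^N$ of dimension strictly greater than $n$ on which $M$ contains, up to constants, the $\ell_q$-ball of radius $V$; any $n$ linear functionals then leave a nonzero vector in $\tilde L\cap M$ of $l_q$-norm $\gtrsim V$.

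In the principal regime $m^*\ge 2n+1$ I choose coordinates $S\subset\{1,\dots,N\}$ with $|S|=\lceil m^*\rceil\le N$ and apply Kashin's theorem inside $L_S\cong\R^{|S|}$: this produces $\tilde L\subset L_S$ with $\dim\tilde L\ge|S|/2>n$ on which $\|x\|_\infty\lesssim|S|^{-1/2}\|x\|_2$. Since every $p_k$ and $q$ lies in $[2,\infty]$, H\"older interpolation between this bound and the trivial $\ell_2$-identity gives $\|x\|_p\asymp|S|^{1/p-1/2}\|x\|_2$ on $\tilde L$ for each $p\in\{p_1,\dots,p_r,q\}$. The constraints defining $M$ thus collapse on $\tilde L$ to a single Euclidean constraint $\|x\|_2\lesssim\min_k\nu_k|S|^{1/2-1/p_k}$; a direct computation from \eqref{lambda_def} and the optimality of $(i^*,j^*)$ shows $\nu_k(m^*)^{1/q-1/p_k}\ge V$ for every $k$, with equality at $k\in\{i^*,j^*\}$. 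Hence the minimum is $\asymp V\cdot|S|^{1/2-1/q}$, and the resulting $\ell_q$-ball inside $\tilde L\cap M$ has radius $\asymp V$.

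The main obstacle is the regime $m^*<2n$: taking $|S|\asymp m^*$ yields a Kashin subspace of dimension at most $n$, while enlarging $|S|\asymp n$ shifts the binding index in $\min_k\nu_k|S|^{1/q-1/p_k}$ to some $k$ with $p_k<q$, and the Kashin-reduced bound then picks up a factor $(|S|/m^*)^{1/q-1/p_{i^*}}<1$. Closing this gap requires either a more delicate simultaneous-norm subspace extraction that exploits $n\le N/4$ to keep the slack factor $(|S|/m^*)^{1/q-1/p_{i^*}}$ of order $1$, or replacing Kashin by a combinatorial argument on sparse sign-vectors: the family $\{a\epsilon\mathbf{1}_S:\epsilon\in\{\pm 1\}^{|S|}\}$ with $|S|=\lceil m^*\rceil$ and $a$ chosen to saturate the intersection constraints lies entirely in $M$ with $l_q$-norm exactly $V$, and by a pigeonhole/concentration argument in $\R^n$ one locates two such vectors whose images under the $n$ functionals differ by $o(V)$, so that the rescaled difference lies in $M\cap\bigcap_i\ker x_i^*$ with $l_q$-norm $\gtrsim V$.
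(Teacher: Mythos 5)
Your upper bound coincides with the paper's (the H\"older inclusion $\cap_{k=1}^r\nu_kB_{p_k}^N\subset\nu_i^{1-\lambda_{ij}}\nu_j^{\lambda_{ij}}B_q^N$), and your Kashin-subspace argument in the regime $m^*\ge 2n$ is essentially sound: for exponents in $[2,\infty]$ all the norms do collapse to multiples of the Euclidean norm on a Kashin subspace of proportional dimension, and a subspace of dimension exceeding $n$ meets the intersection of any $n$ kernels nontrivially. But the substance of Theorem \ref{main2} is precisely the complementary regime: the claimed width is $\asymp V$ with \emph{no decay in $n$} all the way up to $n=N/4$, while $m^*$ (the paper's $s$, defined by \eqref{s_def}) depends only on the radii $\nu_j$ and can be $O(1)$. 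You candidly flag this regime as open, and neither of your proposed repairs closes it. The pigeonhole sketch fails for two distinct reasons. First, the images of the sign vectors $a\epsilon\mathbf{1}_S$ under $n$ functionals lie in a continuum, so pigeonhole yields at best approximate coincidence, and a vector only approximately in $\cap_i\ker x_i^*$ gives no lower bound at all for $d^n$ (the Gelfand width quantifies over the exact kernel). Second, even granting an exact collision, the difference of two sign vectors is supported on their symmetric difference $D\subset S$, and after rescaling to the boundary of $M$ its $l_q$-norm is $\min_k\nu_k|D|^{1/q-1/p_k}\le\nu_{j^*}|D|^{1/q-1/p_{j^*}}$; since $p_{j^*}>q$ the exponent $1/q-1/p_{j^*}$ is positive, so this collapses when $|D|\ll|S|$, and no counting argument controls $|D|$ from below.

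The missing ingredient, which the paper imports rather than proves, is the Malykhin--Ryutin rigidity theorem \eqref{mr_th}: $d^n(B^{m,k}_{2,\infty},\,l^{m,k}_{\infty,1})\asymp k$ for all $n\le mk/2$. The paper's route (after reducing to $N=2^m$, which the hypothesis $n\le N/4$ accommodates) is: choose $k$ with $2^k\le s<2^{k+1}$, note that since all $p_j\ge2$ the scaled product of Euclidean balls $a\,B_{2,\infty}^{2^{m-k},2^k}$ with $a=\min_j\nu_j2^{-k/p_j}$ sits inside $\cap_{j}\nu_jB_{p_j}^N$, use $\|x\|_{l_q^N}\ge2^{-k/q'}\|x\|_{l^{2^{m-k},2^k}_{\infty,1}}$, and then apply \eqref{mr_th}. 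The decisive point is that the embedded product ball has \emph{full} dimension $2^m\asymp N$, so the estimate survives for all $n\le N/4$ no matter how small $s$ is --- exactly where Kashin subspaces (which die at dimension $\asymp s$) cannot help. The remaining computation, that $2^{-k/q'}a\cdot 2^k\asymp s^{1/q}\min_j\nu_js^{-1/p_j}=\nu_{i^*}^{1-\lambda_{i^*j^*}}\nu_{j^*}^{\lambda_{i^*j^*}}$, is the analogue of your ``equality at $i^*,j^*$'' claim and is verified in the paper via \eqref{lambda_def} and the optimality of the pair, much as you indicate. So the gap in your proposal is essential, not cosmetic: without the Malykhin--Ryutin theorem (or an equivalent rigidity statement for products of Euclidean balls), the hard half of the lower bound does not go through.
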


Now we formulate the generalization of Theorems \ref{main1} and \ref{main2} for an intersection of an arbitrary family of balls; in addition, we write the estimates for linear widths. Here \eqref{pj_upor}, \eqref{ne_vkl} may fail.
\begin{Trm}
\label{cor1} Let $N\in \N$, $n\in \Z_+$, $2\le q\le \infty$. Let $A$ be a nonempty set, and let $\nu_\alpha>0$, $1<p_\alpha\le \infty$, $\alpha\in A$. Suppose that $\hat p:=\inf _{\alpha\in A} p_\alpha >1$.
\begin{enumerate}
\item If $p_\alpha\le q$ for each $\alpha\in A$, then, for $n\le N/2$,
$$
d^n(\cap _{\alpha\in A} \nu_\alpha B_{p_\alpha}^N, \, l_q^N) \underset{\hat{p}}{\asymp} 
\inf _{\alpha\in A} (\nu_\alpha\min \{1, \, n^{-1/2}N^{1/p_\alpha'}\}). 
$$
If, in addition, $1/q+1/p_\alpha\le 1$ for all $\alpha\in A$, then the similar order estimate holds for linear widths. 
\item Let $p_\alpha\ge 2$ for each $\alpha\in A$. Given $p_\alpha<q$, $p_\beta>q$, we define the numbers $\lambda_{\alpha \beta}$ by the equations $\frac 1q = \frac{1-\lambda_{\alpha \beta}}{p_\alpha} + \frac{\lambda_{\alpha \beta}}{p_\beta}$. Then, for $n\le N/4$,
$$
d^n(\cap _{\alpha\in A} \nu_\alpha B_{p_\alpha}^N, \, l_q^N) \asymp \lambda_n(\cap _{\alpha\in A} \nu_\alpha B_{p_\alpha}^N, \, l_q^N)\asymp
$$
$$
\asymp \min \{\inf _{p_\alpha\ge q} \nu_\alpha N^{1/q-1/p_\alpha}, \, \inf _{p_\alpha\le q} \nu_\alpha, \, \inf _{p_\alpha<q, \, p_\beta>q} \nu_\alpha^{1-\lambda_{\alpha\beta}} \nu_\beta^{\lambda_{\alpha \beta}}\}.
$$
\end{enumerate}
\end{Trm}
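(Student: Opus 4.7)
The proof splits into an upper bound, a lower bound, and (for the specified sub-cases) the addition of linear widths; throughout, the strategy is to reduce from the arbitrary family~$A$ to a finite subfamily and invoke Theorems~\ref{main1} and~\ref{main2}.

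For the upper bound, monotonicity of $d^n$ in the set gives, for every $\alpha\in A$,
$$d^n(\cap_{\beta\in A}\nu_\beta B_{p_\beta}^N,l_q^N)\le d^n(\nu_\alpha B_{p_\alpha}^N,l_q^N),$$
and the single-ball estimates surveyed in the introduction contribute, after taking $\inf_\alpha$, each of the single-ball terms listed on the right-hand side of each part. For the interpolation term $\nu_\alpha^{1-\lambda_{\alpha\beta}}\nu_\beta^{\lambda_{\alpha\beta}}$ in part~2, H\"older's inequality gives $\|x\|_q\le\|x\|_{p_\alpha}^{1-\lambda_{\alpha\beta}}\|x\|_{p_\beta}^{\lambda_{\alpha\beta}}\le\nu_\alpha^{1-\lambda_{\alpha\beta}}\nu_\beta^{\lambda_{\alpha\beta}}$ for any $x$ in the intersection with $p_\alpha\le q\le p_\beta$, so even $d^0$ is bounded by this quantity.

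The main step is the lower bound, where I would construct a finite $B\subset A$ satisfying the inclusion
$$\cap_{\beta\in B}\nu_\beta B_{p_\beta}^N\subset C(\hat p)\cdot\cap_{\alpha\in A}\nu_\alpha B_{p_\alpha}^N.$$
Such an inclusion yields $d^n(\cap_A,l_q^N)\ge C(\hat p)^{-1}d^n(\cap_B,l_q^N)$ by monotonicity and homogeneity of the Gelfand widths. Once $B$ also captures the infimum on the right-hand side up to a factor of~$2$, Theorem~\ref{main1} or Theorem~\ref{main2} applied to $\cap_B$ --- after enumerating its elements to satisfy \eqref{pj_upor} and pruning redundant balls to satisfy \eqref{ne_vkl}, as explained after \eqref{ne_vkl} --- closes the argument. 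To build $B$ I would partition the range of $1/p_\alpha$ into finitely many pieces of small length and pick, within each piece, an element with $\nu_\alpha$ close to the infimum of the $\nu$'s in that piece; log-convexity of $l_p$ norms in $1/p$, in the form $\|x\|_{p_\alpha}\le\|x\|_{p_{\beta_1}}^{1-\mu}\|x\|_{p_{\beta_2}}^\mu$ for $p_{\beta_1}\le p_\alpha\le p_{\beta_2}$, interpolates $\|x\|_{p_\alpha}$ for any $\alpha\in A$ between the norms at the two endpoints of its piece, producing a loss depending only on the mesh size and hence only on $\hat p$.

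For the linear widths, the lower bound is immediate from $\lambda_n\ge d^n$. For the upper bound, under $1/q+1/p_\alpha\le 1$ (part~1) or $p_\alpha\ge 2$ (part~2) the single-ball results in the cited literature give $\lambda_n(\nu_\alpha B_{p_\alpha}^N,l_q^N)\asymp d^n(\nu_\alpha B_{p_\alpha}^N,l_q^N)$, while $\lambda_n\le\lambda_0\le\nu_\alpha^{1-\lambda_{\alpha\beta}}\nu_\beta^{\lambda_{\alpha\beta}}$ handles the mixed term. The principal obstacle is the finite-subfamily reduction in the lower bound: the log-convexity interpolation forces the constant $C(\hat p)$ in part~1 to depend on $\hat p$ (explaining the $\underset{\hat p}{\asymp}$ in the statement), whereas in part~2 the assumption $p_\alpha\ge 2$ keeps the interpolation parameters away from the critical value $p=1$, so the constant there is absolute.
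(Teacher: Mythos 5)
Your overall architecture coincides with the paper's: upper bounds from monotonicity, Theorem~\ref{spg_teor} and the H\"older inclusion (Theorem~\ref{incl_teor}); lower bounds by reduction to a finite subfamily handled by Theorems~\ref{main1} and~\ref{main2} (the paper does the finite case as in \cite[Proposition 1]{vas_int_sob} and the infinite case as in \cite[\S 5]{vas_mix_sev}); and $\lambda_n\ge d^n$ plus the single-ball linear-width estimates for the linear widths. However, the one step you actually work out --- the construction of the finite subfamily $B$ with $\cap_{\beta\in B}\nu_\beta B^N_{p_\beta}\subset C(\hat p)\cap_{\alpha\in A}\nu_\alpha B^N_{p_\alpha}$ --- fails as stated. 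Write $u_\alpha=1/p_\alpha$, $v_\alpha=\log\nu_\alpha$; after discarding redundant balls the effective families have $v$ convex and increasing in $u$ with slopes in $[0,\log N]$ (this is the content of \eqref{ne_vkl} and exact log-convexity). For $\alpha$ lying between two representatives $\beta_1,\beta_2$, your interpolation bound is the chord value $\nu_{\beta_1}^{1-\mu}\nu_{\beta_2}^{\mu}$, and for a convex profile the chord exceeds $\nu_\alpha$ by a factor $\exp\bigl(O(\delta\cdot\Delta)\bigr)$, where $\delta$ is the mesh and $\Delta$ the local slope variation, which can be of order $\log N$. Concretely, take $p_\alpha$ dense with $\log\nu_\alpha=\frac{\log N}{2R}(1/p_\alpha-1/q)^2$ over a $u$-range of length $R$: the near-infimal representative of each piece is forced to its left endpoint, and testing $\cap_B$ against vectors with $s=e^t$ equal nonzero coordinates, $t$ the slope at a mid-piece point $\alpha$, shows that the inclusion fails unless $C\ge N^{c\delta^2}$. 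So no mesh depending only on $\hat p$ can give a constant depending only on $\hat p$; the loss is \emph{not} a function of the mesh alone.

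The repair is cheap but must be said: take the mesh $\delta\asymp 1/\log N$ (then the chord gap is $O(1)$ and $C$ is controlled), at the price of $|B|\lesssim \log N$ balls. This is legitimate precisely because the constants in Theorems~\ref{main1} and~\ref{main2} do not depend on the number $r$ of balls --- Theorem~\ref{main1}'s constant depends only on $p_1$ and Theorem~\ref{main2}'s is absolute --- and your proof should invoke this $r$-independence explicitly, since an $N$-dependent $|B|$ would otherwise be fatal. Your closing explanation of the constants is also misattributed: log-convexity of $l_p$-norms is exact, with no degeneration as $p\to 1$, so the interpolation step contributes no $\hat p$-dependence in part 1 and no special advantage in part 2. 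The true sources are the single-ball Gluskin-type estimates behind Theorem~\ref{main1} (constants depending on $\hat p$ for $p$ near $1$, cf.\ the remark after Theorem~\ref{spg_teor}) versus the Malykhin--Ryutin bound \eqref{mr_th} behind Theorem~\ref{main2} (absolute constants). The remaining plan-level steps (pruning to \eqref{ne_vkl}, capturing the infimum by $B$ up to a factor $2$, the elementary case $2\le p_\alpha\le q$ in part 1, and $\lambda_n\le\lambda_0\le\nu_\alpha^{1-\lambda_{\alpha\beta}}\nu_\beta^{\lambda_{\alpha\beta}}$ for the mixed term) are fine.
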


We also obtain order estimates for the Gelfand widths of an intersection of two balls $\nu_1B_{p_1}^N\cap \nu_2 B_{p_2}^N$ in $l_2^N$ for $1<p_1<2<p_2$, $1\le \frac{\nu_1}{\nu_2} \le N^{1/p_1-1/2}$. In the case $N^{1/p_1-1/2}< \frac{\nu_1}{\nu_2} \le N^{1/p_1-1/p_2}$, the order estimate is obtained only for small $n$.

\begin{Trm}
\label{main3} Let $1<p_1<2<p_2\le \infty$, $1\le \frac{\nu_1}{\nu_2} \le N^{1/p_1-1/p_2}$. We define the number $\lambda\in (0, \, 1)$ by the equation
\begin{align}
\label{121lp1}
\frac 12 = \frac{1-\lambda}{p_1} + \frac{\lambda}{p_2}.
\end{align}
Then there is an absolute constant $a_0>0$ such that
\begin{enumerate}
\item for $\frac{\nu_1}{\nu_2} \le N^{1/p_1-1/2}$, $n\le a_0N$, we have
$$
d^n(\nu_1B_{p_1}^N\cap \nu_2 B_{p_2}^N, \, l_2^N) \underset{p_1}{\asymp} \min \{\nu_1^{1-\lambda} \nu_2^\lambda, \, \nu_1 n^{-1/2}N^{1/p_1'}\};
$$
\item for $\frac{\nu_1}{\nu_2} > N^{1/p_1-1/2}$, $n\le a_0\left(\frac{\nu_1}{\nu_2}\right)^{2\lambda-2}N$, we have
$$
d^n(\nu_1B_{p_1}^N\cap \nu_2 B_{p_2}^N, \, l_2^N) \asymp \nu_1^{1-\lambda} \nu_2^\lambda.
$$
\end{enumerate}
\end{Trm}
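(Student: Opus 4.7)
The plan is to establish matching upper and lower bounds; the two terms inside the $\min$ in Case~1 (and the single term in Case~2) each arise from a separate construction. Throughout, I exploit the pointwise H\"older bound $\|x\|_2\le\|x\|_{p_1}^{1-\lambda}\|x\|_{p_2}^\lambda$ (a reformulation of \eqref{121lp1}) and I work with the critical support size $k^*:=\lfloor(\nu_1/\nu_2)^{1/(1/p_1-1/p_2)}\rfloor$, which lies in $[1,N]$ thanks to the standing hypothesis $\nu_1/\nu_2\le N^{1/p_1-1/p_2}$.

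\emph{Upper bounds.} The H\"older inclusion $\nu_1 B_{p_1}^N\cap\nu_2 B_{p_2}^N\subset\nu_1^{1-\lambda}\nu_2^\lambda B_2^N$ immediately produces $d^n\le\nu_1^{1-\lambda}\nu_2^\lambda$. For the second term of the Case~1 minimum, I discard the $B_{p_2}$-constraint and invoke the known single-ball estimate $d^n(B_{p_1}^N,l_2^N)\underset{p_1}{\asymp}n^{-1/2}N^{1/p_1'}$ (for $1<p_1<2$, $n\le a_0N$) from the sources cited in the Introduction.

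\emph{Lower bound via an inscribed finite-dimensional $B_{p_2}$-ball.} For any $\sigma\subset\{1,\ldots,N\}$ with $|\sigma|=k^*$, the monotonicity $\|x\|_{p_1}\le(k^*)^{1/p_1-1/p_2}\|x\|_{p_2}$ for $x$ supported on $\sigma$, combined with the definition of $k^*$, gives $\nu_2 B_{p_2}^\sigma\subset\nu_1 B_{p_1}^N\cap\nu_2 B_{p_2}^N$. The classical estimate $d^n(B_{p_2}^{k^*},l_2^{k^*})\asymp(k^*)^{1/2-1/p_2}$ (valid for $p_2\ge 2$ and $n\le k^*/2$) then yields $d^n\gtrsim\nu_2(k^*)^{1/2-1/p_2}=\nu_1^{1-\lambda}\nu_2^\lambda$. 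Since $k^*/2\le n^*:=N^{2/p_1'}(\nu_1/\nu_2)^{2\lambda}$ (the threshold at which the two Case~1 terms coincide), this construction already supplies the full lower bound in both cases throughout the range $n\le k^*/2$.

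\emph{Lower bound by spread extremizers --- the main obstacle.} For $n>k^*/2$ the inscribed $B_{p_2}$-ball construction fails, because the $B_{p_1}$-constraint now forces the inscribed radius strictly below $\nu_2$. The plan is to establish a spread version of the classical Gluskin lower bound: for $1<p_1<2$ and $n\le a_0 N$, every codimension-$n$ subspace $L\subset\mathbb{R}^N$ contains a vector $x$ with $\|x\|_{p_1}\le\nu_1$, $\|x\|_\infty\lesssim_{p_1}\nu_1 N^{-1/p_1}$, and $\|x\|_2\gtrsim_{p_1}\nu_1 n^{-1/2}N^{1/p_1'}$. Given this, the $l_\infty$-bound gives $\|x\|_{p_2}\le N^{1/p_2}\|x\|_\infty\lesssim_{p_1}\nu_1 N^{1/p_2-1/p_1}\le C_{p_1}\nu_2$ by the standing hypothesis, so (after rescaling) $x$ belongs to the intersection and produces $d^n\gtrsim_{p_1}\nu_1 n^{-1/2}N^{1/p_1'}$. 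This directly furnishes the second term of Case~1 for $n>n^*$; in Case~2, a short algebraic computation using $2\lambda-2=-2(1-\lambda)$ and the case hypothesis $\nu_1/\nu_2>N^{1/p_1-1/2}$ shows that the restriction $n\le a_0(\nu_1/\nu_2)^{2\lambda-2}N$ forces $\nu_1 n^{-1/2}N^{1/p_1'}\gtrsim\nu_1^{1-\lambda}\nu_2^\lambda$, so the same spread extremizer also closes Case~2 for $n>k^*/2$. The existence of the spread extremizer is the technical heart of the proof; it should follow from a Gaussian (or Gordon/Gluskin-type) random-subspace argument applied to the thinned set $\nu_1 B_{p_1}^N\cap CN^{-1/p_1}\nu_1 B_\infty^N$, in the spirit of the techniques developed in \cite{galeev1, vas_ball_inters} for related intersection-width problems.
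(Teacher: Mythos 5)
Your upper bounds and your inscribed-ball argument are sound, and the latter is genuinely more elementary than anything in the paper (which never inscribes a sub-ball in this proof); but it only covers $n\le k^*/2$, and everything beyond that rests on your ``spread extremizer'' lemma, which is not merely unproven --- it is false. If $\|x\|_{p_1}\le\nu_1$ and $\|x\|_\infty\le C\nu_1 N^{-1/p_1}$, then interpolation, $\|x\|_2^2\le\|x\|_\infty^{2-p_1}\|x\|_{p_1}^{p_1}$, forces $\|x\|_2\le C'\nu_1 N^{1/2-1/p_1}$, while you demand $\|x\|_2\gtrsim\nu_1 n^{-1/2}N^{1/p_1'}=\nu_1 n^{-1/2}N^{1-1/p_1}$; the demanded bound exceeds the interpolation ceiling by the factor $(N/n)^{1/2}$, so for any fixed constants the two requirements are incompatible throughout the regime $n\le a_0N$ of the theorem (already $n=1$ gives a contradiction by a factor $N^{1/2}$). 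Equivalently, your own H\"older step shows that any such flat $x$ lies in $C\nu_2B_{p_2}^N$, hence $\|x\|_2\lesssim\nu_1^{1-\lambda}\nu_2^\lambda$, which is far below $\nu_1n^{-1/2}N^{1/p_1'}$ whenever $n\ll n^*$. The true near-extremizers for $d^n(B_{p_1}^N,\,l_2^N)$ are supported on roughly $s$ coordinates at height $\nu_1 s^{-1/p_1}$, with $s$ depending on $n$ as in \eqref{tils_2}; flatness at the fixed scale $N^{-1/p_1}$ is the wrong ansatz. A further methodological problem: a Gelfand lower bound must defeat \emph{every} subspace of codimension $n$, so Gordon/Gluskin-type \emph{random-subspace} arguments (which produce one good subspace, i.e.\ upper bounds) cannot deliver it; what is needed is an averaging-over-all-subspaces mechanism.

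Concretely, your proof leaves the range $k^*/2<n\le n^*$ with no valid argument --- and this range is typically enormous (for $\nu_1/\nu_2\asymp 1$ one has $k^*\asymp 1$ while $n^*\asymp N^{2/p_1'}$): there the required lower bound is $\nu_1^{1-\lambda}\nu_2^\lambda$, the inscribed ball is too small, and no flat vector of the kind you postulate exists. The paper's mechanism is precisely the repair. It passes to the dual via Theorem \ref{dual_teor} and applies the averaged subgradient inequality (Lemmas \ref{qnorm} and \ref{low_est_lem}) with the intersection norm \eqref{norm_x}: taking $s\asymp k^*$ gives $A\asymp\nu_1^{1-\lambda}\nu_2^\lambda$, and the crucial factor $\|I\|_{X^*\to l_2^N}\le\max\{\nu_2^{-1},\,\nu_1^{-1}N^{1/p_1-1/2}\}$ --- which encodes exactly the $B_{p_2}$-constraint you discarded --- extends the bound $\gtrsim\nu_1^{1-\lambda}\nu_2^\lambda$ to all $n\lesssim n^*$ in case a) and to all $n\lesssim(\nu_1/\nu_2)^{2\lambda-2}N$ in case b); the remaining range of assertion 1 is then closed by the same lemma with the $n$-dependent choice \eqref{tils_2} of $s$. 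Your case-2 algebra (deriving $\nu_1n^{-1/2}N^{1/p_1'}\gtrsim\nu_1^{1-\lambda}\nu_2^\lambda$ from $n\le a_0(\nu_1/\nu_2)^{2\lambda-2}N$) is correct as arithmetic, but it rests entirely on the false lemma, so assertion 2 and the large-$n$ part of assertion 1 remain unproved in your proposal.
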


The paper is organized as follows. In \S 2 some well-known results are formulated. In \S 3 we generalize Gluskin's method \cite{gluskin1}; it will be used in estimating the widths from below. In \S 4, the main results are proved. In \S 5, we apply these results in estimating the Gelfand widths of an intersection of Sobolev classes on a John domain. Notice that the problem of estimating the Kolmogorov widths of an intersection of Sobolev classes was studied in \cite{galeev1, galeev2, vas_int_sob}. In \S 6, we obtain some generalizations of Lemma \ref{low_est_lem}.

\section{Preliminary results}

Let us formulate Ioffe's and Tikhomirov's result about the duality of the Kolmogorov and Gelfand widths (the finite-dimensional case).

\begin{trma}
\label{dual_teor} {\rm (see \cite{ioffe_tikh})}. Let $X=(\R^N, \, \|\cdot\|_X)$, $Y=(\R^N, \, \|\cdot\|_Y)$, let $X^*$, $Y^*$ be duals of $X$ and $Y$, and let $B_X$, $B_Y$, $B_{X^*}$, $B_{Y^*}$ be the corresponding unit balls. Then, for all $n=0, \, 1, \, \dots, \, N$,
$$
d_n(B_X, \, Y) = d^n(B_{Y^*}, \, X^*).
$$
\end{trma}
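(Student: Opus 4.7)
The plan is to reduce both widths to a common expression using the Hahn--Banach theorem. I would begin with the standard distance formula: for any subspace $L$ of a finite-dimensional normed space $Y$ and any $x\in Y$,
$$
\inf_{y\in L}\|x-y\|_Y = \sup\bigl\{\langle x,y^*\rangle : y^*\in L^\perp \cap B_{Y^*}\bigr\},
$$
where $L^\perp=\{y^*\in Y^*:\langle y,y^*\rangle=0 \text{ for all }y\in L\}$ is the annihilator of $L$. This is a direct consequence of Hahn--Banach applied to the quotient $Y/L$, and in $\R^N$ compactness makes the supremum a maximum.

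With this in hand, for each fixed $L\in\mathcal{L}_n(Y)$, swapping the two suprema gives
$$
\sup_{x\in B_X}\inf_{y\in L}\|x-y\|_Y = \sup_{x\in B_X}\sup_{y^*\in L^\perp\cap B_{Y^*}}\langle x,y^*\rangle = \sup_{y^*\in L^\perp\cap B_{Y^*}}\|y^*\|_{X^*},
$$
and taking the infimum over $L$ yields
$$
d_n(B_X,Y)=\inf_{L\in\mathcal{L}_n(Y)}\ \sup_{y^*\in L^\perp\cap B_{Y^*}}\|y^*\|_{X^*}.
$$

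Next I would match this against $d^n(B_{Y^*},X^*)$. Since we are in finite dimensions, $(X^*)^*=X$, so a choice of $n$ functionals on $X^*$ is the same as a choice of vectors $z_1,\dots,z_n\in X=\R^N$, and $\bigcap_{j=1}^n\ker z_j = L^\perp$ where $L=\mathrm{span}(z_1,\dots,z_n)$. Conversely every $L\in\mathcal{L}_n(Y)$ arises this way. Therefore
$$
d^n(B_{Y^*},X^*)=\inf_{z_1,\dots,z_n\in X}\sup\bigl\{\|y^*\|_{X^*}:y^*\in B_{Y^*}\cap\ker z_1\cap\cdots\cap\ker z_n\bigr\},
$$
and the inner supremum coincides with $\sup_{y^*\in L^\perp\cap B_{Y^*}}\|y^*\|_{X^*}$, proving $d^n(B_{Y^*},X^*)=d_n(B_X,Y)$.

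The substantive step is the Hahn--Banach distance formula; everything else is bookkeeping. The only mild subtlety worth flagging is that the infimum over $n$-tuples $(z_1,\dots,z_n)$ matches the infimum over subspaces $L\in\mathcal{L}_n(Y)$ of dimension \emph{at most} $n$: a degenerate tuple spanning a subspace of dimension $<n$ produces a \emph{larger} $L^\perp$, so the inner supremum is only larger, and the two infima therefore coincide. I would not expect any difficulty beyond carefully reciting Hahn--Banach.
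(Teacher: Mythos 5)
Your proof is correct, but there is nothing in the paper to compare it against: the paper states this duality as a known result of Ioffe and Tikhomirov, citing \cite{ioffe_tikh}, and gives no proof. Taken on its own merits, your argument is the standard finite-dimensional proof and it goes through. The Hahn--Banach distance formula $\inf_{y\in L}\|x-y\|_Y=\sup\{\langle x,\,y^*\rangle:\ y^*\in L^\perp\cap B_{Y^*}\}$ is indeed the only substantive ingredient; the exchange of the two suprema is legitimate since both are suprema (no minimax issue arises), and $\sup_{x\in B_X}\langle x,\,y^*\rangle=\|y^*\|_{X^*}$ holds by definition of the dual norm. The identification $(X^*)^*=X$ in finite dimensions, together with $\bigcap_{j=1}^n\ker z_j=(\mathrm{span}\{z_1,\,\dots,\,z_n\})^\perp$, makes the correspondence between $n$-tuples of functionals and subspaces exact. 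Your closing remark about degenerate tuples is slightly more than is needed: since ${\cal L}_n(Y)$ is by definition the family of subspaces of dimension \emph{at most} $n$, the spans of $n$-tuples range over exactly this family, so the two infima coincide without any monotonicity argument (though the one you give is also valid). One small point worth making explicit if you write this up: in the formula $d^n(B_{Y^*},\,X^*)$ the kernels $\ker z_j$ are taken inside $\R^N$ viewed as $X^*$, i.e.\ $\ker z_j=\{y^*:\ \langle z_j,\,y^*\rangle=0\}$ under the standard pairing; this is what makes $L^\perp$ on the two sides literally the same set. With that understood, the proof is complete and self-contained, which is more than the paper itself provides for this statement.
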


We write the well-known order estimates for the Gelfand and linear widths of the finite-dimensional balls in the cases which will be used later.

\begin{trma}
\label{spg_teor}
{\rm (see \cite{pietsch1, stesin, bib_gluskin}).} Let $n\le N/2$, $1\le q\le \infty$, $1<p\le \infty$.
\begin{enumerate}
\item If $p\ge q$, then $d^n(B_p^N, \, l_q^N) = \lambda_n(B_p^N, \, l_q^N) \asymp N^{1/q-1/p}$.

\item If $2\le p\le q$, then $d^n(B_p^N, \, l_q^N) \asymp \lambda_n(B_p^N, \, l_q^N) \asymp 1$.

\item If $1<\hat p\le p\le 2\le q$, then $d^n(B_p^N, \, l_q^N) \underset{\hat p}{\asymp} \min \{1, \, n^{-1/2}N^{1/p'}\}$; if, in addition, $\frac{1}{p}+\frac{1}{q}\le 1$, then $\lambda_n(B_p^N, \, l_q^N) \underset{\hat p}{\asymp} \min \{1, \, n^{-1/2}N^{1/p'}\}$.
\end{enumerate}
\end{trma}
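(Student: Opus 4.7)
The proof splits into an upper and a lower bound. The upper bound is elementary: by H\"older's inequality and \eqref{121lp1},
$\|x\|_2\le\|x\|_{p_1}^{1-\lambda}\|x\|_{p_2}^{\lambda}$ for every $x\in\R^N$,
which gives the inclusion $\nu_1B_{p_1}^N\cap\nu_2B_{p_2}^N\subset\nu_1^{1-\lambda}\nu_2^{\lambda}B_2^N$ and hence $d^n\le\nu_1^{1-\lambda}\nu_2^{\lambda}$ in both cases (take $L=\R^N$ in the definition). The complementary entry $\nu_1n^{-1/2}N^{1/p_1'}$ of the minimum in case~1 follows from the monotonicity of $d^n$ in its first argument, the inclusion $\nu_1B_{p_1}^N\cap\nu_2B_{p_2}^N\subset\nu_1B_{p_1}^N$, and Theorem~B(iii) applied to $B_{p_1}^N$.

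For the lower bound the plan is to exhibit an explicit ``thick'' subset inside $\nu_1B_{p_1}^N\cap\nu_2B_{p_2}^N$ and feed it to the generalization of Gluskin's method developed in \S 3 (Lemma~\ref{low_est_lem}). Set $k^*:=\lceil(\nu_1/\nu_2)^{p_1p_2/(p_2-p_1)}\rceil$, the support size at which the two constraints $\|\cdot\|_{p_j}\le\nu_j$ become simultaneously tight on coordinate-flat vectors; the hypothesis $\nu_1/\nu_2\le N^{1/p_1-1/p_2}$ forces $k^*\le N$. A direct check shows that any $v\in\R^N$ with $|\supp v|\le k^*$ and $\|v\|_\infty\le\nu_1(k^*)^{-1/p_1}$ satisfies $\|v\|_{p_j}\le\nu_j$ for $j=1,2$, while $\|v\|_2\le\nu_1(k^*)^{1/2-1/p_1}\asymp\nu_1^{1-\lambda}\nu_2^{\lambda}$ (the latter asymptotic follows from $\lambda(1/p_1-1/p_2)=1/p_1-1/2$). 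The convex hull $W$ of such vectors therefore sits inside $\nu_1B_{p_1}^N\cap\nu_2B_{p_2}^N$ and carries a sign-vector on each $k^*$-subset of $[N]$, which is exactly the structure the generalized Gluskin lemma is built to exploit. Applied to $W$, the lemma produces $d^n(W,l_2^N)\gtrsim\nu_1^{1-\lambda}\nu_2^{\lambda}$ in the prescribed range of $n$; the threshold $n\le a_0(\nu_1/\nu_2)^{2\lambda-2}N$ in case~2 is exactly the sharpest admissible hypothesis of the lemma for these parameters.

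The remaining piece is the lower bound $\nu_1n^{-1/2}N^{1/p_1'}$ in case~1. Here one appeals to the lower half of Theorem~B(iii) for $d^n(\nu_1B_{p_1}^N,l_2^N)$: the vectors realizing it can be chosen coordinate-flat with $\|\cdot\|_\infty\asymp\nu_1N^{-1/p_1}$, and therefore $\|\cdot\|_{p_2}\lesssim\nu_1N^{1/p_2-1/p_1}$. The stronger case-1 hypothesis $\nu_1/\nu_2\le N^{1/p_1-1/2}\le N^{1/p_1-1/p_2}$ makes this last quantity at most $\nu_2$, so the extremal vectors lie in $\nu_2B_{p_2}^N$ as well and the lower bound for the single ball $\nu_1B_{p_1}^N$ transfers verbatim.

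\textbf{Main obstacle.} The hard point is the lower bound in case~2 for $n$ as large as $a_0(\nu_1/\nu_2)^{2\lambda-2}N$. A naive Gluskin-type argument using only $W$ would permit at most $n\lesssim N/k^*$, which is strictly smaller; bridging this gap is precisely the raison d'\^etre of the generalization of Gluskin's method in \S 3, and the whole proof ultimately reduces to a careful verification that $W$, with the parameters $(k^*,\nu_1(k^*)^{-1/p_1})$ above, meets the geometric hypothesis of Lemma~\ref{low_est_lem}.
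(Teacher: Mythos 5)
You have proved the wrong statement. The statement under review is Theorem~\ref{spg_teor}, the classical estimates for the widths of a \emph{single} ball $B_p^N$ in $l_q^N$; everything in your proposal, from the interpolation bound $\|x\|_{l_2^N}\le \|x\|_{l_{p_1}^N}^{1-\lambda}\|x\|_{l_{p_2}^N}^{\lambda}$ and the inclusion $\nu_1B_{p_1}^N\cap\nu_2B_{p_2}^N\subset\nu_1^{1-\lambda}\nu_2^{\lambda}B_2^N$ through the threshold $n\le a_0(\nu_1/\nu_2)^{2\lambda-2}N$, is a sketch of Theorem~\ref{main3} on the intersection of two balls in $l_2^N$. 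None of the three assertions of Theorem~\ref{spg_teor} (the cases $p\ge q$, $2\le p\le q$, $\hat p\le p\le 2$) is addressed. The paper, for its part, does not reprove Theorem~\ref{spg_teor} at all: it is quoted from \cite{pietsch1,stesin,bib_gluskin}, and the only original content is the remark that follows it, which upgrades the constants in assertion~3 from $p$-dependent to $\hat p$-dependent. Concretely: the upper Gelfand estimate with a constant depending only on $\hat p$ follows from $d^n(B_p^N,\,l_q^N)\le 1$ and $B_p^N\subset N^{1/\hat p-1/p}B_{\hat p}^N$; the lower estimate with an \emph{absolute} constant is obtained by running Lemma~\ref{low_est_lem} with $r=1$, $\nu_1=1$, $s=1$ together with $\|x\|_{l_2^N}\le N^{1/p-1/2}\|x\|_{l_{p'}^N}$, which covers $n\le a\cdot N^{2/p'}$, the remaining range $a\cdot N^{2/p'}\le n\le N/2$ being handled as in \cite{gluskin1}; the linear widths are bounded below via $d^n\le\lambda_n$ and above by observing that the constants in \cite{bib_gluskin} can be chosen continuously in $p$ and taking the maximum over $p\in[\hat p,\,2]$. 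Your proposal contains none of this.

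Even read charitably as an attempt at Theorem~\ref{main3}, the proposal has two genuine gaps. First, the transfer argument for the bound $\nu_1 n^{-1/2}N^{1/p_1'}$ is invalid: a lower bound for the Gelfand width $d^n(\nu_1B_{p_1}^N,\,l_2^N)$ quantifies over \emph{all} subspaces of codimension $n$, and the vectors witnessing it depend on the subspace, so observing that some particular coordinate-flat extremal family also lies in $\nu_2B_{p_2}^N$ proves nothing --- shrinking the set can only decrease the width, and the estimate does not ``transfer verbatim''. The paper instead reapplies Lemma~\ref{low_est_lem} to the intersection itself with an $n$-dependent sparsity $s=\lceil\tilde s\rceil$, $\tilde s=\bigl(a^{-1}n^{1/2}N^{-1/p_1'}\bigr)^{\frac{1}{1/p_1-1/2}}$ (see \eqref{tils_2}), chosen so that $A\asymp\nu_1n^{-1/2}N^{1/p_1'}$. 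Second, the core lower bound in case~2 is delegated wholesale to the lemma (``the lemma produces \dots the threshold is exactly the sharpest admissible hypothesis'') without the computation that carries all the content: one must bound $\|I\|_{X^*\to l_2^N}$ by $\nu_2^{-1}$ (respectively by $\nu_1^{-1}N^{1/p_1-1/2}$ in case~1) and verify $A\,n^{1/2}N^{-1/2}\|I\|_{X^*\to l_2^N}\le c\,a$, which is precisely where the ranges $n\le a_0N$ and $n\le a_0(\nu_1/\nu_2)^{2\lambda-2}N$ originate; your ``main obstacle'' paragraph acknowledges this verification but does not perform it.
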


We need a remark about assertion 3 of Theorem \ref{spg_teor}. In {\rm \cite{bib_gluskin}} the constants in the order equality depended on $p$; here they depend on $\hat p$. From the trivial estimate $d^n(B_p^N, \, l_q^N)\le 1$ and the inclusion $B_p^N \subset N^{1/\hat p-1/p}B_{\hat p}^N$ it follows that the constant in the upper estimate of the Gelfand width for $p\in [\hat p, \, 2]$ depends only on $\hat p$. In the lower estimate the constant is absolute. In order to prove this, we slightly modify the arguments from \cite{gluskin1}. To this end, we use Lemma \ref{low_est_lem} (see \S 3 in this paper) for $r=1$, $\nu_1=1$, $s=1$ together with the inequality $\|x\|_{l_2^N}\le N^{1/p-1/2}\|x\|_{l_{p'}^N}$; this yields the estimate of $d^n(B_p^N, \, l_q^N)$ for $n\le a\cdot N^{2/p'}$, where $a$ is an absolute constant. The estimate in the case  $a\cdot N^{2/p'}\le n\le N/2$ can be derived from the estimate in the case $n\le a\cdot N^{2/p'}$; the arguments are similar as in \cite{gluskin1}. This together with the inequality $d^n(M, \, X)\le \lambda_n(M, \, X)$ implies that the constant in the lower estimates of the linear widths is also absolute. Finally, from the proof of the upper estimates for the linear widths in \cite{bib_gluskin} we see that the constant in the order inequality can be chosen continuously depending on $p$; taking the maximum of these vaues over $p\in [\hat p, \, 2]$, we get the constant depending only on $\hat p$.

The following assertion is a particular case of Galeev's result \cite[Theorem 2]{galeev1}; it also follows from H\"{o}lder's inequality.

\begin{trma}
\label{incl_teor} {{\rm (see \cite{galeev1}).}} Let $\nu_1, \, \nu_2>0$, $1\le p_1, \, p_2\le \infty$, $\lambda\in [0, \, 1]$, $\frac{1}{q} = \frac{1-\lambda}{p_1} + \frac{\lambda}{p_2}$. Then
$$
\nu_1B^N_{p_1} \cap \nu_2 B^N_{p_2} \subset \nu_1^{1-\lambda} \nu_2^\lambda B_q^N.
$$
\end{trma}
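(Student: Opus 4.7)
The plan is straightforward: the inclusion reduces to a single application of H\"{o}lder's inequality applied coordinate-wise, which is the reason the paper remarks ``it also follows from H\"{o}lder's inequality.'' Take $x=(x_i)_{i=1}^N\in \nu_1 B^N_{p_1}\cap \nu_2 B^N_{p_2}$, so that $\|x\|_{l_{p_1}^N}\le \nu_1$ and $\|x\|_{l_{p_2}^N}\le \nu_2$; the goal is to show $\|x\|_{l_q^N}\le \nu_1^{1-\lambda}\nu_2^\lambda$.

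First I would split each summand of $\|x\|_{l_q^N}^q = \sum_i |x_i|^q$ as $|x_i|^{(1-\lambda)q}\cdot |x_i|^{\lambda q}$. The crucial observation is that the two exponents $a=\frac{p_1}{(1-\lambda)q}$ and $b=\frac{p_2}{\lambda q}$ are H\"{o}lder conjugate, because
$$
\frac{1}{a}+\frac{1}{b} \;=\; q\left(\frac{1-\lambda}{p_1}+\frac{\lambda}{p_2}\right) \;=\; 1
$$
by the defining relation \eqref{lambda_def}-style for $q$. Applying H\"{o}lder's inequality to the two factors then yields
$$
\sum_i |x_i|^q \;\le\; \Bigl(\sum_i |x_i|^{p_1}\Bigr)^{(1-\lambda)q/p_1}\Bigl(\sum_i |x_i|^{p_2}\Bigr)^{\lambda q/p_2} \;\le\; \nu_1^{(1-\lambda)q}\nu_2^{\lambda q},
$$
and taking $q$-th roots gives the claimed inclusion.

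The only real bookkeeping is the boundary cases. If $\lambda=0$ (so $q=p_1$) or $\lambda=1$ (so $q=p_2$), the statement reduces immediately to one of the two hypotheses and no interpolation is needed. When exactly one of $p_1$, $p_2$ is $\infty$, one interprets H\"{o}lder in the usual way (the corresponding factor becomes an $\ell_\infty$ norm, and the conjugacy check still goes through); if both equal $\infty$ then $q=\infty$, and $\|x\|_{l_\infty^N}\le \min\{\nu_1,\nu_2\}\le \nu_1^{1-\lambda}\nu_2^\lambda$ by the weighted geometric-mean inequality. There is no genuine obstacle here — the entire argument is a one-line application of H\"{o}lder after the correct choice of conjugate exponents, plus trivial boundary checks.
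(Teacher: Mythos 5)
Your proof is correct and is precisely the argument the paper has in mind: the paper gives no proof of this theorem, citing Galeev and remarking that ``it also follows from H\"older's inequality,'' which is exactly the coordinate-wise H\"older interpolation you carry out (with conjugate exponents $a=\frac{p_1}{(1-\lambda)q}$, $b=\frac{p_2}{\lambda q}$, both indeed $\ge 1$ since $(1-\lambda)q\le p_1$ and $\lambda q\le p_2$). Your handling of the boundary cases $\lambda\in\{0,1\}$ and $p_i=\infty$ is also sound, so there is nothing to add.
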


Let $m, \, k\in \N$, $1\le p<\infty$, $1\le \theta<\infty$. By $l_{p,\theta}^{m,k}$ we denote the space $\R^{mk}$ with the norm
$$
\|(x_{i,j})_{1\le i\le m, \, 1\le j\le k}\|_{l_{p,\theta}^{m,k}} = \left(\sum \limits _{j=1}^k\left(\sum \limits _{i=1}^m |x_{i,j}|^p\right)^{\theta/p}\right)^{1/\theta}.
$$
For $p=\infty$ or $\theta=\infty$, the definition is naturally modified. By $B_{p,\theta}^{m,k}$ we denote the unit ball of the space $l_{p,\theta}^{m,k}$.

The following result was proved by Malykhin and Rjutin \cite{mal_rjut}.
\begin{trma}
{\rm (see \cite{mal_rjut}).} Let $m$, $k\in \N$, $n\le mk/2$. Then
\begin{align}
\label{mr_th}
d^n(B^{m,k}_{2,\infty}, \, l^{m,k}_{\infty,1}) \asymp k.
\end{align}
\end{trma}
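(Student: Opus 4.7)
The upper bound is immediate: any $x\in B_{2,\infty}^{m,k}$ has $\|x^{(j)}\|_\infty\le\|x^{(j)}\|_2\le 1$ in each column, so $\|x\|_{\infty,1}=\sum_j\|x^{(j)}\|_\infty\le k$; hence $B_{2,\infty}^{m,k}\subset k\cdot B_{\infty,1}^{m,k}$ and $d^n\le d^0\le k$ for every $n$.

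For the lower bound I would first apply the Ioffe--Tikhomirov duality of Theorem \ref{dual_teor}. Since the dual of $l_{p,\theta}^{m,k}$ is $l_{p',\theta'}^{m,k}$, this rewrites the Gelfand width as a Kolmogorov width
$$
d^n(B_{2,\infty}^{m,k},l_{\infty,1}^{m,k})=d_n(B_{1,\infty}^{m,k},l_{2,1}^{m,k}),
$$
and it suffices to prove $d_n(B_{1,\infty}^{m,k},l_{2,1}^{m,k})\gtrsim k$ whenever $n\le mk/2$.

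I would attack this Kolmogorov lower bound probabilistically. Fix an $n$-dimensional subspace $L\subset\R^{mk}$ and consider the random ``column-spike'' matrix $X$ whose $j$-th column is $\epsilon_j e_{i_j}$, with $i_j\in\{1,\dots,m\}$ and $\epsilon_j\in\{\pm 1\}$ drawn independently and uniformly. Each realization lies in $B_{1,\infty}^{m,k}$ with $\|X\|_{l_{2,1}^{m,k}}=k$, and the covariance of $X$ is $(1/m)I_{mk}$. The crude first-moment estimate
$$
\mathbb{E}\,\|P_{L^\perp}X\|_{\ell_2}^2=\frac{mk-n}{m}\ge\frac{k}{2},
$$
combined with the pointwise inequality $\|\cdot\|_{l_{2,1}^{m,k}}\ge\|\cdot\|_{\ell_2^{mk}}$, already yields $\mathbb{E}\,\mathrm{dist}_{l_{2,1}}(X,L)\gtrsim\sqrt{k}$, but this is a factor $\sqrt{k}$ short of the target.

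The main obstacle is closing this gap, and it has to be done column-wise, because $l_{2,1}$ sums rather than averages in quadrature the column $\ell_2$-norms. My plan is: letting $\Pi_j$ be the projector onto the $j$-th column subspace, compute $\mathbb{E}\|(P_{L^\perp}X)^{(j)}\|_{\ell_2}^2=\mathrm{tr}(\Pi_j P_{L^\perp})/m$, and use $\sum_j\mathrm{tr}(\Pi_j P_{L^\perp})=mk-n\ge mk/2$ with Markov's inequality to isolate $\Omega(k)$ columns on which $\mathrm{tr}(\Pi_j P_{L^\perp})\gtrsim m$; on any such column, conditioning on the $(i_{j'})$'s exhibits $(P_{L^\perp}X)^{(j)}$ as a Rademacher sum of fixed vectors, so Khintchine's inequality together with Paley--Zygmund produces $\mathbb{E}\|(P_{L^\perp}X)^{(j)}\|_{\ell_2}\gtrsim 1$; summing over the good columns gives the required $\Omega(k)$. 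The remaining subtlety is that $\mathrm{dist}_{l_{2,1}}(X,L)$ is not literally $\|P_{L^\perp}X\|_{l_{2,1}}$, since the $l_{2,1}$-optimal approximation in $L$ need not be the $\ell_2$-orthogonal projection; I would handle this via the dual characterization
$$
\mathrm{dist}_{l_{2,1}}(X,L)=\sup\{\langle f,X\rangle:f\in L^\perp,\ \|f\|_{l_{2,\infty}^{m,k}}\le 1\},
$$
with the test functional $f$ taken to be the column-$\ell_2$-normalization of $P_{L^\perp}X$. I expect the combination of column-wise Khintchine concentration with this dual-certificate construction to be the technical heart of the argument.
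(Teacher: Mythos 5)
Two preliminary remarks: the paper itself gives no proof of this statement --- it is quoted as a known theorem of Malykhin and Ryutin \cite{mal_rjut} --- so your proposal has to be judged on its own merits rather than against an argument in the text. The parts of your plan that are sound: the upper bound is correct; the duality step is correct, since $(l^{m,k}_{p,\theta})^*=l^{m,k}_{p',\theta'}$ and Theorem \ref{dual_teor} give $d^n(B^{m,k}_{2,\infty},\,l^{m,k}_{\infty,1})=d_n(B^{m,k}_{1,\infty},\,l^{m,k}_{2,1})$, which is exactly the ``product of octahedra'' form in which the result is proved in \cite{mal_rjut}; and your column-spike randomization with the trace identity $\mathbb{E}\|(P_{L^\perp}X)^{(j)}\|_{l_2}^2=\mathrm{tr}(\Pi_jP_{L^\perp})/m$, the pigeonhole over columns, and conditional Khintchine plus Paley--Zygmund does legitimately yield $\mathbb{E}\,\|P_{L^\perp}X\|_{l^{m,k}_{2,1}}\gtrsim k$ for every $L$ with $\dim L\le mk/2$.

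The gap is the final step, and it is not a ``remaining subtlety'' --- it is the whole theorem. Your dual certificate, the columnwise $l_2$-normalization $f$ of $P_{L^\perp}X$, satisfies $\|f\|_{l_{2,\infty}}\le 1$ but is \emph{not} in $L^\perp$: rescaling the individual columns of a vector of $L^\perp$ by different scalars takes it out of $L^\perp$ whenever $L$ couples the columns (if $L$ splits as a direct sum of per-column subspaces the statement is easy anyway), so $\langle f,\,X\rangle$ does not minorize $\mathrm{dist}_{l_{2,1}}(X,\,L)$. If you restore admissibility by replacing $f$ with $P_{L^\perp}f$, the pairing is unchanged, but the only available bound on $\|P_{L^\perp}f\|_{l_{2,\infty}}$ goes through $\|f\|_{l_2}\le\sqrt{k}$, and you are back at $\mathrm{dist}\gtrsim\sqrt{k}$. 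Nor can any pointwise inequality rescue the plan, because $\mathrm{dist}_{l_{2,1}}(X,\,L)$ can genuinely be smaller than $\|P_{L^\perp}X\|_{l_{2,1}}$ by a factor of order $\sqrt{k}$: already for $m=1$, where $l^{1,k}_{2,1}=l_1^k$, taking $u=\mathbf{1}+\sqrt{k}\,e_1$, $L=\mathrm{span}\,\{u\}$, $x=\mathbf{1}$, one has $\mathrm{dist}_{l_1}(x,\,L)\le\|x-u\|_{l_1}=\sqrt{k}$ while $P_{L^\perp}x=x-\frac12 u$ has $\|P_{L^\perp}x\|_{l_1}\asymp k$. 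A quantitative sanity check that this conversion is not a routine patch: the fixed-certificate averaging machinery of \S 6 of this very paper, applied with the natural column-symmetry group, $\hat x=b$ the spike matrix, $\|b\|^2_{l_2^{mk}}=k$ and $\|I\|_{l_{2,1}\to l_2}=1$, yields $d_n(B^{m,k}_{1,\infty},\,l^{m,k}_{2,1})\gtrsim k$ only in the range $n\lesssim m$, far short of $n\le mk/2$. Passing from control of $\|P_{L^\perp}X\|_{l_{2,1}}$ to control of the $l_{2,1}$-distance in the full range is precisely the hard core of \cite{mal_rjut}, and your proposal leaves it open.
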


\section{Auxiliary assertions}

In order to prove Theorems \ref{main1}, \ref{main3}, we will use some generalizations of the method from \cite{gluskin1}.

Let $(X, \, \|\cdot \|)$ be a normed space. Given $x\in X$, we denote by $f_x$ a supporting functional in $x$; i.e., $f_x\in X^*$, $\|f_x\|_{X^*}=1$, $f_x(x) = \|x\|$.

\begin{Lem}
\label{qnorm}
Let $(X, \, \|\cdot \|)$ be a normed space. Then there is an absolute constant $c>0$ such that, for all $x, \, h\in X$,
\begin{align}
\label{quad_norm} \|x+h\|^2 \ge \frac{\|x\|^2}{2} + 2\|x\|\cdot f_x(h) + c\|h\|^2.
\end{align}
\end{Lem}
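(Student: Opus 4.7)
The plan is to combine two elementary inequalities that hold in any normed space. First, since $f_x\in X^*$ has norm one with $f_x(x)=\|x\|$, linearity of $f_x$ and $\|x+h\|\ge |f_x(x+h)|$ yield
$$
\|x+h\|^2 \ge (\|x\|+f_x(h))^2 = \|x\|^2 + 2\|x\|f_x(h) + f_x(h)^2. \quad (\mathrm{I})
$$
Second, the reverse triangle inequality $\|x+h\|\ge \bigl|\|h\|-\|x\|\bigr|$ gives
$$
\|x+h\|^2 \ge (\|h\|-\|x\|)^2 = \|h\|^2 - 2\|x\|\,\|h\| + \|x\|^2. \quad (\mathrm{II})
$$

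Then I would split on the ratio $\|h\|/\|x\|$ with some threshold (say $8$). If $\|x\|\ge \|h\|/8$, drop the nonnegative term $f_x(h)^2$ in (I) and cut the remaining $\|x\|^2$ in half: one half produces $\|x\|^2/2+2\|x\|f_x(h)$, which matches the first two terms on the right-hand side of \eqref{quad_norm}, while the other half satisfies $\|x\|^2/2 \ge \|h\|^2/128$, supplying the quadratic term. If instead $\|x\|<\|h\|/8$, then (II) gives $\|x+h\|^2 \ge (7\|h\|/8)^2 = 49\|h\|^2/64$, whereas the quantities I must dominate are bounded crudely by $\|x\|^2/2 + 2\|x\|\,\|h\| \le \|h\|^2/128 + \|h\|^2/4$; a positive multiple of $\|h\|^2$ remains, comfortably of order $\|h\|^2/128$ or better.

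Either case yields an absolute constant (the crude case split above gives $c=1/128$; a weighted convex combination of (I) and (II) sharpens this, but the numerical value is immaterial). I do not expect a genuine obstacle, but the one conceptual point worth flagging is that (I) alone cannot produce the $c\|h\|^2$ term on the right: $f_x(h)$ may vanish while $\|h\|$ is large, so the term $f_x(h)^2$ in (I) carries no useful lower bound. It is precisely inequality (II) that supplies the quadratic control by $\|h\|^2$ in the regime where $\|x\|$ is too small for (I) to be effective, and the merging of the two regimes is what forces the case split (or the weighted combination).
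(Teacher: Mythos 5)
Your proof is correct and follows essentially the same strategy as the paper: the squared supporting-functional inequality combined with a case split on the ratio $\|h\|/\|x\|$, where the small-$\|h\|$ regime donates half of $\|x\|^2$ to produce the $c\|h\|^2$ term and the large-$\|h\|$ regime is handled by a triangle-type estimate together with the crude bound $|f_x(h)|\le \|h\|$. One small simplification worth noting: by squaring $\|x+h\|\ge |f_x(x+h)|$ you get $\|x+h\|^2\ge (\|x\|+f_x(h))^2$ directly for all $h$, whereas the paper starts from the subgradient inequality $\|x+h\|\ge \|x\|+f_x(h)$ (whose right-hand side may be negative) and therefore needs an extra convexity argument to extend the squared estimate \eqref{quad_subd} from $\|h\|\le\|x\|$ to all $h$.
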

\begin{proof}
If $x=0$, then the inequality holds with $c=1$. Further we consider only $x\ne 0$.

First we prove that
\begin{align}
\label{quad_subd} \|x+h\|^2 \ge \|x\|^2 + 2\|x\|\cdot f_x(h).
\end{align}

It is well-known that $f_x$ is a subgradient of the norm at the point $x$; i.e., for each $h\in X$ we have $\|x+h\| \ge \|x\| + f_x(h)$. If $\|h\|\le \|x\|$, the right-hand side of the inequality is non-negative; hence $\|x+h\|^2 \ge (\|x\| + f_x(h))^2 \ge \|x\|^2 + 2\|x\| f_x(h)$. Therefore, \eqref{quad_subd} holds for $\|h\|\le \|x\|$. Since the left-hand side of \eqref{quad_subd} in convex in $h$ and the right-hand side is affine, this inequality holds for all $h\in X$.

From \eqref{quad_subd} it follows that if $\|h\| \le t\|x\|$ with some $t>0$, then
\begin{align}
\label{h_small} \|x+h\|^2 \ge \frac{\|x\|^2}{2} + 2\|x\|\cdot f_x(h) + \frac{\|h\|^2}{2t^2}.
\end{align}

We prove that there is an absolute constant $t_0>0$ such that, for $\|h\|\ge t_0\|x\|$,
\begin{align}
\label{h_big} \|x+h\|^2 \ge \frac{\|x\|^2}{2} + 2\|x\|\cdot f_x(h) + \frac{\|h\|^2}{4}.
\end{align}
Indeed, from the inequality $(a+b)^2\le 2a^2 + 2b^2$ it follows that
\begin{align}
\label{xh}
\|x+h\|^2+\|x\|^2 \ge \frac{\|h\|^2}{2}.
\end{align}
If
\begin{align}
\label{xh1}
\frac{\|h\|^2}{4} \ge 2\|x\|^2 +2\|x\| f_x(h),
\end{align}
then this together with \eqref{xh} implies \eqref{h_big}. We show that if $\|h\| = t\|x\|$, then for sufficiently large $t>0$ the inequality \eqref{xh1} holds. Indeed, since $\|f_x\|_{X^*}=1$, it suffices to check that $\frac{\|h\|^2}{4} \ge 2\|x\|^2 +2\|x\| \cdot \|h\|$, or $\frac{t^2}{4}\ge 2 + 2t$. It holds for $t\ge 10$.

From \eqref{h_small} and \eqref{h_big} we get \eqref{quad_norm}.
\end{proof}

Let $G=S_N \times \{-1, \, 1\}^N$, where $S_N$ is the group of permutations of $N$ elements. For $x=(x_1, \, \dots, \, x_N)\in \R^N$, $g=(\sigma, \, \varepsilon_1, \, \dots, \, \varepsilon_N)$ we denote
\begin{align}
\label{dx_dejstv}
g(x) = (\varepsilon_1 x_{\sigma(1)}, \, \dots, \, \varepsilon_N x_{\sigma(N)}).
\end{align}

Let $s\in \{1, \, \dots, \, N\}$, $1\le q\le \infty$. We set $\hat x = (\hat x_1, \, \dots, \, \hat x_N)$, where
\begin{align}
\label{hat_xj_def}
\hat x_j = \begin{cases} s^{-1/q'}, & 1\le j\le s, \\ 0, & s+1\le j\le N.\end{cases}
\end{align}

\begin{Lem}
\label{low_est_lem} Let $s\in \{1, \, \dots, \, N\}$, $1\le q\le \infty$, let $c$ be the absolute constant from Lemma {\rm \ref{qnorm}}, and let $\nu_j>0$, $1\le p_j\le \infty$, $1\le j\le r$. Let $X$ be the space $\R^N$ with the norm
\begin{align}
\label{norm_x} \|x\|_X = \max _{1\le j\le r} \nu_j^{-1}\|x\|_{l_{p_j}^N},
\end{align}
let $X^*$ be its dual, let $I$ be the identity operator on $\R^N$, and let
\begin{align}
\label{big_a_def} A = \min _{1\le j\le r} \nu_j s^{1/q-1/p_j}.
\end{align}
Then
\begin{align}
\label{dn_est_a_sq}
(d^n(\cap _{j=1}^r \nu_j B_{p_j}^N, \, l_q^N))^2 \ge \inf _{t\ge 0}\Bigl(\frac{A^2}{2} - 2A^2 \frac{n^{1/2}s^{1/2-1/q}}{N^{1/2}} \|I\|_{X^* \rightarrow l_2^N}t + ct^2\Bigr).
\end{align}
\end{Lem}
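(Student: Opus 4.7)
The plan is to reduce to Kolmogorov widths via Theorem \ref{dual_teor} and then run a Gluskin-style averaging in $X^*$, with Lemma \ref{qnorm} furnishing the quadratic inequality. The starting point is the identity $\|\hat x\|_{X^*} = A$, which is what forces $A^2/2$ to appear in the final bound. The upper estimate $\|\hat x\|_{X^*} \le \min_j \nu_j \|\hat x\|_{l_{p_j'}^N} = \min_j \nu_j s^{1/p_j' - 1/q'} = A$ is immediate from H\"older applied to each factor $\nu_j B^N_{p_j}$, and the matching lower estimate comes from evaluating the support function at $\tilde x := \hat x/\|\hat x\|_X \in \cap_j \nu_j B^N_{p_j}$, which gives $\langle \hat x, \tilde x\rangle = \|\hat x\|_2^2/\|\hat x\|_X = A$. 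In particular $\tilde x$ is a valid supporting functional of $\hat x$ for $\|\cdot\|_{X^*}$; by $G$-equivariance $g(\tilde x)$ is then a supporting functional of $g(\hat x)$ for every $g \in G$. Also, $\|\hat x\|_{l_{q'}^N} = 1$, so every $g(\hat x)$ lies in $B^N_{q'}$.

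By Theorem \ref{dual_teor}, $d^n(\cap_{j=1}^r \nu_j B^N_{p_j}, l^N_q) = d_n(B^N_{q'}, X^*)$. Fix any $L' \subset \R^N$ with $\dim L' \le n$; it suffices to lower bound $\sup_{y \in B^N_{q'}} \inf_{z \in L'} \|y - z\|^2_{X^*}$. Applying Lemma \ref{qnorm} in the space $X^*$ with $x = g(\hat x)$ and $h = -z$ gives, for every $g \in G$ and every $z \in \R^N$,
$$\|g(\hat x) - z\|_{X^*}^2 \ge \frac{A^2}{2} - 2A\langle g(\tilde x), z\rangle + c\|z\|_{X^*}^2.$$
Let $P$ denote the $l_2$-orthogonal projection onto $L'$. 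For $z \in L'$, self-adjointness gives $\langle g(\tilde x), z\rangle = \langle Pg(\tilde x), z\rangle$, and Cauchy--Schwarz together with $\|z\|_2 \le \|I\|_{X^* \to l_2^N}\,\|z\|_{X^*}$ yields $|\langle g(\tilde x), z\rangle| \le \|Pg(\tilde x)\|_2\,\|I\|_{X^* \to l_2^N}\,\|z\|_{X^*}$.

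The final ingredient is the second-moment identity $E_g (g\tilde x)_i (g\tilde x)_j = \delta_{ij}\|\tilde x\|_2^2/N$, which combined with $\mathrm{tr}(P) = n$ gives $E_g \|Pg(\tilde x)\|_2^2 = n\|\tilde x\|_2^2/N$. Using $\|\tilde x\|_2 = As^{1/2 - 1/q}$ (a consequence of $\|\hat x\|_2^2 = s^{2/q - 1}$ and $A\|\hat x\|_X = s^{2/q - 1}$), one concludes that some $g_0 \in G$ satisfies $\|Pg_0(\tilde x)\|_2 \le (n/N)^{1/2} A s^{1/2 - 1/q}$. For this $g_0$ and any $z \in L'$,
$$\|g_0(\hat x) - z\|_{X^*}^2 \ge \frac{A^2}{2} - 2A^2 \frac{n^{1/2} s^{1/2 - 1/q}}{N^{1/2}}\|I\|_{X^* \to l_2^N}\|z\|_{X^*} + c\|z\|_{X^*}^2.$$
Since the right-hand side depends on $z$ only through $t := \|z\|_{X^*} \in [0, \infty)$, taking $\inf_{z \in L'}$ recovers precisely the $\inf_{t \ge 0}$ on the right of the stated inequality. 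The main obstacle is the identity $\|\hat x\|_{X^*} = A$ and the explicit choice of $\tilde x$ as a supporting functional; once these are in hand the rest reduces to duality, Lemma \ref{qnorm}, and a standard Hilbert--Schmidt averaging.
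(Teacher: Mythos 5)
Your proof is correct, and its skeleton is the paper's: duality via Theorem \ref{dual_teor}, the quadratic inequality of Lemma \ref{qnorm} applied in $X^*$ at the points $g\hat x$ with $\|g\hat x\|_{X^*}=A$, and randomization over the group $G$. Your computation of the norm and of the supporting functional is also the same in substance but slicker in form: your $\tilde x=\hat x/\|\hat x\|_X$ is exactly the paper's vector $\alpha=(t,\dots,t,0,\dots,0)$ with $t=As^{-1/q}$, i.e.\ your functional $\langle g\tilde x,\cdot\rangle$ coincides with the paper's $f_{g\hat x}$ from \eqref{f_gx_y}. Where you genuinely depart is the treatment of the cross term. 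For a fixed subspace $L$ the paper keeps the whole family of nearest points $y_g$, averages the quadratic inequality over $g\in G$, and invokes Gluskin's inequality $\bigl|\frac{1}{|G|}\sum_{g\in G}\sum_{i=1}^N(g\hat z)_i(y_g)_i\bigr|\le \frac{n^{1/2}s^{1/2}}{N^{1/2}}\bigl(\frac{1}{|G|}\sum_{g\in G}\|y_g\|_{l_2^N}^2\bigr)^{1/2}$, quoted from \cite{gluskin1} and proved there by an orthogonal projection in the function space $L_2(G)$ (the same mechanism the paper reruns in \S 6). You instead project in $\R^N$: for $z\in L'$ you write $\langle g\tilde x,z\rangle=\langle Pg\tilde x,z\rangle$ with $P$ the orthogonal projection onto $L'$, use the second-moment identity to get $\frac{1}{|G|}\sum_{g\in G}\|Pg\tilde x\|_{l_2^N}^2={\rm tr}(P)\|\tilde x\|_{l_2^N}^2/N\le n\|\tilde x\|_{l_2^N}^2/N$, and pigeonhole a single $g_0$; the resulting bound holds uniformly in $z\in L'$, so no nearest-point selection is needed at all. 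Since each $y_g$ lies in $L$ anyway, your uniform-in-$z$ bound is exactly as strong as the averaged one, and it makes the proof self-contained (no appeal to \cite{gluskin1}); the paper's $L_2(G)$ formulation, on the other hand, is the one it later abstracts in \S 6, though your variant would generalize to transitive subgroups $H\subset S_N$ just as well. Two small points worth fixing: ${\rm tr}(P)=\dim L'\le n$, not necessarily $=n$; and the asserted identity $A\|\hat x\|_X=s^{2/q-1}$, which underlies both $\langle\hat x,\tilde x\rangle=A$ and $\|\tilde x\|_{l_2^N}=As^{1/2-1/q}$, deserves the one-line justification that the same index attains $\min_{1\le j\le r}\nu_js^{1/q-1/p_j}$ and $\max_{1\le j\le r}\nu_j^{-1}s^{1/p_j-1/q'}$, namely any minimizer of $\nu_js^{-1/p_j}$.
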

\begin{proof}
First we prove that
\begin{align}
\label{norm_hat_x} \|\hat x\|_{X^*} = A.
\end{align}
Indeed,
$$
\|\hat x\|_{X^*} = \max _{y\in B_X} \langle \hat x, \, y\rangle \stackrel{\eqref{hat_xj_def}, \eqref{norm_x}}{=} \max \Bigl\{ \sum \limits _{i=1}^s s^{-1/q'}y_i:\; y \in \cap _{j=1}^r \nu_j B_{p_j}^N\Bigr\} =$$$$= \min _{1\le j\le r} \nu_j s^{1/p_j'-1/q'} = A.
$$

Let $f_{\hat x}(y) = \sum \limits _{j=1}^N \alpha_jy_j$. We will choose $\alpha=(\alpha_1, \, \dots, \, \alpha_N)$ such that $f_{\hat x}$ is a support functional at the point $\hat x \in X^*$.

We have
$\|f_{\hat x}\| = \|\alpha\|_X$.
Let us prove that if $\alpha = (t, \, \dots, \, t, \, 0, \, \dots, \, 0)$, where $t>0$ is at the first $s$ positions, then
$$
\sum \limits _{j=1}^N \alpha_j\hat x_j = \|\alpha\|_X \|\hat x\|_{X^*}.
$$
Indeed, the left-hand side, by \eqref{hat_xj_def}, is equal to $st\cdot s^{-1/q'} = ts^{1/q}$; the right-hand side, by \eqref{norm_x}, \eqref{big_a_def}, \eqref{norm_hat_x}, is equal to
$$
\max _{1\le j\le N} \nu_j^{-1} \|\alpha\|_{l^N_{p_j}} \min _{1\le j\le r} \nu_j s^{1/q-1/p_j} = \max _{1\le j\le N} \nu_j^{-1} ts^{1/p_j} \min _{1\le j\le r} \nu_j s^{1/q-1/p_j} = ts^{1/q}.
$$
Hence if we choose $t$ such that $\|\alpha\|_X=1$, then $f(y)= t\sum \limits _{j=1}^s y_j$ is the support functional at $\hat x$. We see that $t \stackrel{\eqref{norm_x}}{=} \min _{1\le j\le r} \nu_js^{-1/p_j} \stackrel{\eqref{big_a_def}}{=} A\cdot s^{-1/q}$.

Therefore, $f_{\hat x}(y) =A\cdot s^{-1/q}\sum \limits _{i=1}^s y_i$ is the support functional at $\hat x$.

Similarly, for each $g\in G$, 
\begin{align}
\label{f_gx_y1}
\|g\hat x\|_{X^*} =A,
\end{align}
and if
\begin{align}
\label{f_gx_y}
f_{g\hat x}(y) = A\cdot s^{-1/q}\sum \limits _{i=1}^N g(\hat z)_iy_i,
\end{align}
where $\hat z = s^{1/q'}\hat x$ (i.e., the first $k$ coordinates of $\hat z$ are 1, and the others are 0), then $f_{g\hat x}$ is the support functional at $g\hat x$.

By Theorem \ref{dual_teor},
$$
d^n(\cap _{j=1}^r \nu_j B_{p_j}^N, \, l_q^N) = d_n(B^N_{q'}, \, X^*).
$$

Now we argue similarly as in \cite{gluskin1}. Let $L\subset \R^N$ be a subspace of dimension at most $n$, let $y_g$ be a nearest to $g\hat x$ element of $L$ with respect to the norm of $X^*$. Then
\begin{align}
\label{max_aver}
\max _{g\in G} \|g\hat x-y_g\|^2_{X^*} \ge \frac{1}{|G|}\sum \limits _{g\in G} \|g\hat x-y_g\|^2_{X^*}.
\end{align}
Applying \eqref{quad_norm}, \eqref{f_gx_y1} and \eqref{f_gx_y}, we get that the right-hand side of \eqref{max_aver} is greater or equal to
$$
\frac{A^2}{2} -\frac{1}{|G|} \sum \limits _{g\in G} 2A \sum \limits _{i=1}^N A\cdot s^{-1/q} (g\hat z)_i(y_g)_i +c\cdot \frac{1}{|G|} \sum \limits _{g\in G} \|y_g\|_{X^*}^2 =:T.
$$
In \cite{gluskin1} it was proved that
$$
\left|\frac{1}{|G|} \sum \limits _{g\in G}\sum \limits _{i=1}^N (g\hat z)_i(y_g)_i \right| \le \frac{n^{1/2}s^{1/2}}{N^{1/2}} \left(\frac{1}{|G|} \sum \limits _{g\in G}\|y_g\|^2_{l_2^N}\right)^{1/2}.
$$
Hence
$$
T\ge \frac{A^2}{2} - 2A^2 \frac{n^{1/2}s^{1/2-1/q}}{N^{1/2}} \|I\| _{X^* \rightarrow l_2^N} \left(\frac{1}{|G|} \sum \limits _{g\in G}\|y_g\|^2_{X^*}\right)^{1/2} + c \frac{1}{|G|} \sum \limits _{g\in G}\|y_g\|^2_{X^*}\ge
$$
$$
\ge \inf _{t\ge 0}\Bigl(\frac{A^2}{2} - 2A^2 \frac{n^{1/2}s^{1/2-1/q}}{N^{1/2}} \|I\|_{X^* \rightarrow l_2^N}t + ct^2\Bigr)=:T'.
$$
Therefore, by \eqref{hat_xj_def} and \eqref{max_aver}, $\sup_{x\in B^N_{q'}}\inf _{y\in L}\|x-y\|^2\ge T'$; this implies \eqref{dn_est_a_sq}.
\end{proof}

\section{Proofs of main results}

\renewcommand{\proofname}{\bf Proof of Theorem \ref{main1}}

\begin{proof}
{\it The upper estimate.} By assertion 3 of Theorem \ref{spg_teor},
$$
d^n(\cap_{j=1}^r \nu_j B_{p_j}^N, \, l_q^N)\le \min \{\nu_1 d^n(B_{p_1}^N, \, l_q^N), \, \nu_r d^n(B_{p_r}^N, \, l_q^N)\} \underset{p_1}{\lesssim} $$$$ \lesssim \min \{\nu_r, \, \nu_1n^{-1/2} N^{1/p_1'}\}.
$$

{\it The lower estimate.} If $\nu_r B_2^N \subset \cap_{j=1}^r \nu_j B_{p_j}^N$, then
$$
d^n(\cap_{j=1}^r \nu_j B_{p_j}^N, \, l_q^N) \ge d^n(\nu_r B_2^N, \, l_q^N) \gtrsim \nu_r
$$
by assertion 2 of Theorem \ref{spg_teor}.

Let
\begin{align}
\label{not_incl_b2} \nu_r B_2^N \not\subset \cap_{j=1}^r \nu_j B_{p_j}^N.
\end{align}
We prove that there is an absolute constant $a>0$ such that
\begin{align}
\label{t1_small_n}
d^n(\cap_{j=1}^r \nu_j B_{p_j}^N, \, l_q^N) \gtrsim \nu_r \quad \text{if } n^{1/2} \le a\frac{\nu_1}{\nu_r} N^{1/p_1'}
\end{align}
(notice that here $p_j$ may be greater than $q$),
\begin{align}
\label{est_big_n}
d^n(\cap_{j=1}^r \nu_j B_{p_j}^N, \, l_q^N) \gtrsim \nu_1n^{-1/2}N^{1/p_1'}\quad \text{if } a \frac{\nu_1}{\nu_r}N^{1/p_1'} \le n^{1/2}\le (N/2)^{1/2}.
\end{align}

First we show that it is sufficient to prove \eqref{t1_small_n}, \eqref{est_big_n} for $p_r\le 2$. Indeed, let
$2<p_r\le \infty$. We set $j_0=\max \{j=1, \, \dots, \, r:\; p_j < 2\}$.
If $\nu_{j_0}N^{-1/p_{j_0}} \le \nu_rN^{-1/2}$, we set $j_1=j_0+1$; otherwise, we set $j_1=\min \{j=1, \, \dots, \, j_0:\; \nu_j N^{-1/p_j} > \nu_r N^{-1/2}\}$. Then, for $j_1\le j\le j_0$, we have $\nu_j B^N_{p_j}\supset \nu_r B^N_2$, and 
$$
d^n(\cap _{j=1}^r \nu_j B^N_{p_j}, \, l_q^N)= d^n(\nu_r B^N_2\cap(\cap _{j=1}^{j_1-1} \nu_j B^N_{p_j}), \, l_q^N).
$$
By \eqref{not_incl_b2}, $j_1>1$. Now we can apply \eqref{t1_small_n} or \eqref{est_big_n} for $\nu_r B^N_2\cap(\cap _{j=1}^{j_1-1} \nu_j B^N_{p_j})$ since analogue of \eqref{ne_vkl} holds.

Let us prove \eqref{t1_small_n} for $p_r\le 2$; then $p_j\le 2$, $1\le j\le N$. We apply Lemma \ref{low_est_lem} for $s=1$. By \eqref{ne_vkl} and \eqref{big_a_def}, we have $A=\nu_r$. Hence
$$
(d^n(\cap_{j=1}^r \nu_j B_{p_j}^N, \, l_q^N))^2 \stackrel{\eqref{dn_est_a_sq}}{\ge} \inf _{t\ge 0} \Bigl( \frac{\nu_r^2}{2} - 2\nu_r^2 \frac{n^{1/2}}{N^{1/2}} \|I\|_{X^* \rightarrow l_2^N} t + ct^2\Bigr),
$$
where $X^*$ is the dual space of $(\R^N, \, \max _{1\le j\le r} \nu_j^{-1}\|\cdot\|_{l_{p_j}^N})$.

We have $\|I\|_{X^* \rightarrow l_2^N} = \|I\|_{l_2^N \rightarrow X}$; i.e., it is the smallest constant $C>0$ in the inequality $\max _{1\le j\le r} \nu_j^{-1}\|x\|_{l_{p_j}^N}\le C\|x\|_{l_2^N}$. By H\"{o}lder's inequality, it is not greater than $\max _{1\le j\le r} \nu_j^{-1}N^{1/p_j-1/2} \stackrel{\eqref{ne_vkl}}{=} \nu_1^{-1} N^{1/p_1-1/2}$. Therefore,
$$
\frac{\nu_r^2}{2} - 2\nu_r^2 \frac{n^{1/2}}{N^{1/2}} \|I\|_{X^* \rightarrow l_2^N} t + ct^2 \ge \frac{\nu_r^2}{2} - 2\nu_r^2 \frac{n^{1/2}}{\nu_1N^{1/p_1'}} t + ct^2=:T(t).
$$
Since $n^{1/2} \le a \frac{\nu_1}{\nu_r}N^{1/p_1'}$, we have
$$
\inf _{t\ge 0}T(t)\ge \inf _{t\ge 0}\Bigl(\frac{\nu_r^2}{2} - 2\nu_r at+ct^2\Bigr) \ge \frac{\nu_r^2}{4}
$$
for a sufficiently small absolute constant $a>0$.

This completes the proof of \eqref{t1_small_n} for $p_r\le 2$ and, consequently, for all $p_j\in [1, \, +\infty]$, $1\le j\le r$.

Now we prove \eqref{est_big_n} for $p_r\le 2$.
Notice that it suffices to consider the case $a\frac{\nu_1}{\nu_r} > \frac{1}{\sqrt{2}}$ (otherwise, by \eqref{ne_vkl}, the problem can be reduced to estimating $d^n(\nu_1B_{p_1}^N, \, l_q^N)$ and applying assertion 3 of Theorem \ref{spg_teor}; by the remark after the formulation of this theorem, the constant in the lower estimate is absolute).

We have $$a \frac{\nu_1}{\nu_r}N^{1/p_1'} \le n^{1/2}< a \frac{\nu_1}{\nu_r}N^{1/2}.$$ We define the number $\tilde p_1$ by the equation 
\begin{align}
\label{til_p_def}
n^{1/2} = a \frac{\nu_1}{\nu_r}N^{1/\tilde p_1'}. 
\end{align}
Then $1/\tilde p_1'\in [1/p_1', \, 1/2)$. Recall that $1/p_j'\in (0, \, 1/2]$ since $1<p_j\le 2$ $(j=1, \, \dots, \, r)$. For $2\le j\le r$, we define the numbers $\tilde p_j$ by the equations
\begin{align}
\label{1pjtil}
1/\tilde p_j' = 1/p_j'+1/\tilde p_1'-1/p_1' \in (0, \, 1].
\end{align}
Then
$$
d^n(\cap_{j=1}^r \nu_j B_{p_j}^N, \, l_q^N) \ge N^{1/p_1'-1/\tilde p_1'}d^n(\cap _{j=1}^r \nu_j B^N_{\tilde p_j}, \, l_q^N) \stackrel{\eqref{til_p_def}}{=} $$$$=a \frac{\nu_1}{\nu_r} n^{-1/2}N^{1/p_1'} d^n(\cap _{j=1}^r \nu_j B^N_{\tilde p_j}, \, l_q^N).
$$
Therefore, in order to prove \eqref{est_big_n} it suffices to check that 
$$
d^n(\cap _{j=1}^r \nu_j B^N_{\tilde p_j}, \, l_q^N) \gtrsim \nu_r.
$$
It follows from \eqref{til_p_def} and \eqref{t1_small_n} (here we use that $N^{1/\tilde p_i-1/\tilde p_j} \stackrel{\eqref{1pjtil}}{=} N^{1/p_i-1/p_j}$, $j=1, \, \dots, \, r$, and the analogue of \eqref{ne_vkl} holds for $\{\tilde p_i\}_{i=1}^r$).
\end{proof}

\renewcommand{\proofname}{\bf Proof of Theorem \ref{main2}}

\begin{proof}
The upper estimate follows from the inclusion $\cap _{k=1}^r \nu_k B_{p_k}^{N} \subset \nu_i^{1-\lambda_{ij}} \nu_j^{\lambda_{ij}} B_q^N$, where $p_i\le q$, $p_j\ge q$, $\lambda_{ij}\in [0, \, 1]$ satisfies \eqref{lambda_def} (see Theorem \ref{incl_teor}).

Let us prove the lower estimate. It suffices to consider the case $p_i\ne q$, $1\le i\le r$ (in the general case, the desired estimate can be obtained by passing to limit). Let $p_{i_*}<q<p_{j_*}$,
\begin{align}
\label{min_ij}
\nu_{i_*}^{1-\lambda_{i_*j_*}} \nu_{j_*} ^{\lambda_{i_*j_*}} = \min _{p_i< q, \, p_j> q} \nu_i^{1-\lambda_{ij}} \nu_j^{\lambda_{ij}},
\end{align}
where $\lambda_{ij}$ are defined by \eqref{lambda_def}.

It suffices to prove that, for $N=2^m$, $n\le N/2$,
\begin{align}
\label{2m_est} d^n(\cap _{j=1}^r \nu_j B_{p_j}^{N}, \, l_q^N) \gtrsim \nu_{i_*}^{1-\lambda_{i_*j_*}} \nu_{j_*} ^{\lambda_{i_*j_*}}.
\end{align}
We define the number $s$ by the equation
\begin{align}
\label{s_def} \frac{\nu_{i_*}}{\nu_{j_*}} = s^{1/p_{i_*}-1/p_{j_*}}.
\end{align}
By \eqref{ne_vkl}, $1\le s\le N$. Let $k\in \{0, \, \dots, \, m\}$ be such that 
\begin{align}
\label{k_s_def}
2^k\le s<2^{k+1}. 
\end{align}
We have $\|x\|_{l^m_q} \ge 2^{-k/q'}\|x\|_{l^{2^{m-k}, 2^k}_{\infty,1}}$. Let $a = \min_{1\le j\le r} \nu_j \cdot 2^{-k/p_j}$. Then from the condition $p_j\ge 2$, $1\le j\le r$, it follows that $a\cdot B_{2,\infty}^{2^{m-k},2^k} \subset \cap _{j=1}^r \nu_j B_{p_j}^{2^m}$. Hence
$$
d^n(\cap _{j=1}^r \nu_j B_{p_j}^{N}, \, l_q^N) \ge 2^{-k/q'}a\cdot d^n(B_{2,\infty}^{2^{m-k},2^k}, \, l^{2^{m-k},2^k}_{\infty,1}) \stackrel{\eqref{mr_th},\eqref{k_s_def}}{\gtrsim} $$$$ \gtrsim s\cdot s^{-1/q'}\min_{1\le j\le r} \nu_j s^{-1/p_j} = s^{1/q}\min_{1\le j\le r} \nu_j s^{-1/p_j}.
$$
If
\begin{align}
\label{a_calc} \min_{1\le j\le r} \nu_j s^{-1/p_j} =\nu_{j_*} s^{-1/p_{j_*}},
\end{align}
the right-hand side is equal to
$$
\nu_{j_*} s^{1/q-1/p_{j_*}} \stackrel{\eqref{lambda_def}}{=} \nu_{j_*} s^{(1-\lambda_{i_*j_*})(1/p_{i_*}-1/p_{j_*})} \stackrel{\eqref{s_def}}{=} \nu_{i_*}^{1-\lambda_{i_*j_*}} \nu_{j_*}^{\lambda_{i_*j_*}},
$$
which yields the desired lower estimate.

Let us check \eqref{a_calc}. First we notice that $\nu_{j_*} s^{-1/p_{j_*}} = \nu_{i_*} s^{-1/p_{i_*}}$ by \eqref{s_def}. 

Let $p_j> q$. We define the number $\tilde s$ by the equation 
\begin{align}
\label{til_s_def}
\frac{\nu_j}{\nu_{i_*}} = \tilde s^{1/p_j-1/p_{i_*}}.
\end{align}
We show that $\nu_{i_*}s^{1/p_j-1/p_{i_*}}\le \nu_j$. It is equivalent to the inequality $s^{1/p_j-1/p_{i_*}} \le \tilde s^{1/p_j-1/p_{i_*}}$, or
\begin{align}
\label{sgs}
s\ge \tilde s
\end{align}
(since $p_j>q>p_{i_*}$).

By \eqref{min_ij}, $\nu_{i_*}^{1-\lambda_{i_*j_*}}\nu_{j_*}^{\lambda_{i_*j_*}} \le \nu_{i_*}^{1-\lambda_{i_*j}}\nu_{j}^{\lambda_{i_*j}}$, or $s^{\lambda_{i_*j_*}(1/p_{j_*}-1/p_{i_*})} \le \tilde s^{\lambda_{i_*j}(1/p_{j}-1/p_{i_*})}$ (see \eqref{s_def}, \eqref{til_s_def}). By \eqref{lambda_def}, we get $s^{1/q-1/p_{i_*}} \le \tilde s^{1/q-1/p_{i_*}}$, which implies \eqref{sgs}.

Let $p_i<q$. We will show that $\nu_{j_*}s^{1/p_i-1/p_{j_*}}\le \nu_i$. Now we define the number $\tilde s$ by the equation $\frac{\nu_i}{\nu_{j_*}} = \tilde s^{1/p_i-1/p_{j_*}}$ and check the inequality $\nu_{j_*}s^{1/p_i-1/p_{j_*}}\le \nu_{j_*}\tilde s^{1/p_i-1/p_{j_*}}$, or $s\le \tilde s$. As in the previous case, the last relation can be derived from the inequality $\nu_{i_*}^{1-\lambda_{i_*j_*}}\nu_{j_*}^{\lambda_{i_*j_*}} \le \nu_i^{1-\lambda_{ij_*}}\nu_{j_*}^{\lambda_{ij_*}}$.
\end{proof}

\renewcommand{\proofname}{\bf Proof of Theorem \ref{cor1}}

\begin{proof}
The upper estimates for linear and Gelfand widths follow from Theorems \ref{spg_teor} and \ref{incl_teor}. The lower estimate for Gelfand widths of the intersection of the finite family of balls can be proved similarly as in \cite[Proposition 1]{vas_int_sob}; here we apply Theorems \ref{main1} and \ref{main2}. For the intersection of an arbitrary number of balls, we argue as in \cite[\S 5]{vas_mix_sev}. Linear widths can be estimated from below by the Gelfand widths.
\end{proof}

\renewcommand{\proofname}{\bf Proof of Theorem \ref{main3}}

\begin{proof}
The upper estimate follows from the inclusion $\nu_1B^N_{p_1} \cap \nu_2B^N_{p_2} \subset \nu_1^{1-\lambda} \nu_2^\lambda B_2^N$.

Let us prove the lower estimate. We show that, in the cases a) $\frac{\nu_1}{\nu_2}\le N^{1/p_1-1/2}$, $n^{1/2}\le a\left(\frac{\nu_1}{\nu_2}\right)^\lambda N^{1/p_1'}$, b) $\frac{\nu_1}{\nu_2}\ge N^{1/p_1-1/2}$, $n^{1/2}\le a\left(\frac{\nu_1}{\nu_2}\right)^{\lambda-1} N^{1/2}$ (where $a\in (0, \, 1)$ is an absolute constant), we have 
\begin{align}
\label{dn_est_sm_n} d^n(\nu_1B^N_{p_1} \cap \nu_2B^N_{p_2}, \, l_2^N) \gtrsim \nu_1^{1-\lambda} \nu_2^\lambda.
\end{align}

We define the number $\tilde s$ by the equation 
\begin{align}
\label{t4_til_s_def}
\frac{\nu_1}{\nu_2} = \tilde s^{1/p_1-1/p_2}
\end{align}
and set $s = \lceil \tilde s\rceil$. By the conditions of Theorem \ref{main3}, we have $1\le \frac{\nu_1}{\nu_2}\le N^{1/p_1-1/p_2}$; hence $s\in \{1, \, \dots, \, N\}$. We apply Lemma \ref{low_est_lem} and get
\begin{align}
\label{dn_nu1_l_e}
(d^n(\nu_1 B_{p_1}^N \cap \nu_2 B_{p_2}^N, \, l_2^N))^2 \ge \inf _{t\ge 0}\Bigl(\frac{A^2}{2} - 2A^2 \frac{n^{1/2}}{N^{1/2}} \|I\|_{X^* \rightarrow l_2^N}t + ct^2\Bigr),
\end{align}
where
\begin{align}
\label{a_t4}
A = \min \{\nu_1 s^{1/2-1/p_1}, \, \nu_2 s^{1/2-1/p_2}\} \stackrel{\eqref{t4_til_s_def}}{\asymp} \nu_1 \tilde s^{1/2-1/p_1}\stackrel{\eqref{t4_til_s_def}}{=} \nu_2 \tilde s^{1/2-1/p_2},
\end{align}
and $c$ is from Lemma \ref{qnorm}. For $\frac{\nu_1}{\nu_2} \le N^{1/p_1-1/2}$, we get
\begin{align}
\label{ixst2n}
\|I\|_{X^*\rightarrow l_2^N}= \|I\|_{l_2^N\to X} \stackrel{\eqref{norm_x}}{\le} \max \{\nu_2^{-1}, \, \nu_1^{-1}N^{1/p_1-1/2}\} = \nu_1^{-1}N^{1/p_1-1/2}.
\end{align}
Hence, if $n^{1/2}\le a\left(\frac{\nu_1}{\nu_2}\right)^\lambda N^{1/p_1'}$, then for some absolute constant $c_1>0$ we have
$$
A\frac{n^{1/2}}{N^{1/2}} \|I\|_{X^* \rightarrow l_2^N} \stackrel{\eqref{a_t4}}{\le} c_1\nu_1 \tilde s^{1/2-1/p_1}n^{1/2}N^{-1/2} \nu_1^{-1}N^{1/p_1-1/2} = $$$$\le c_1 \tilde s^{1/2-1/p_1} n^{1/2}N^{-1/p_1'} \stackrel{\eqref{121lp1}}{\le} c_1 \tilde s^{\lambda(1/p_2-1/p_1)}a \left(\frac{\nu_1}{\nu_2}\right)^\lambda \stackrel{\eqref{t4_til_s_def}}{=} c_1a.
$$
If $a$ is sufficiently small, we have 
\begin{align}
\label{dncap_nu_lam}
(d^n(\cap _{j=1}^r \nu_j B_{p_j}^N, \, l_2^N))^2 \stackrel{\eqref{dn_nu1_l_e}}{\ge} \frac{A^2}{4} \stackrel{\eqref{a_t4}}{\asymp} \nu_1^2 \tilde s^{1-2/p_1} \stackrel{\eqref{121lp1}, \eqref{t4_til_s_def}}{=} (\nu_1^{1-\lambda} \nu_2^\lambda)^2;
\end{align}
this completes the proof of \eqref{dn_est_sm_n} in case a).

In case b) we have
$$
\|I\|_{X^*\rightarrow l_2^N} \le \max \{\nu_2^{-1}, \, \nu_1^{-1}N^{1/p_1-1/2}\} = \nu_2^{-1}.
$$
Hence, if $n^{1/2}\le a\left(\frac{\nu_1}{\nu_2}\right)^{\lambda-1} N^{1/2}$, then, for some absolute constant $c_2>0$,
$$
A\frac{n^{1/2}}{N^{1/2}} \|I\|_{X^* \rightarrow l_2^N} \stackrel{\eqref{a_t4}}{\le} c_2\nu_2 \tilde s^{1/2-1/p_2}n^{1/2}N^{-1/2} \nu_2^{-1} \le $$
$$
\le c_2 \tilde s^{1/2-1/p_2}a\left(\frac{\nu_1}{\nu_2}\right)^{\lambda-1} \stackrel{\eqref{121lp1},\eqref{t4_til_s_def}}{=} c_2a;
$$
if $a$ is sufficiently small, then from \eqref{dn_nu1_l_e} we get \eqref{dncap_nu_lam}; hence \eqref{dn_est_sm_n} holds.

This completes the proof of assertion 2 of Theorem \ref{main3}, as well as assertion 1 for $n^{1/2}\le a\left(\frac{\nu_1}{\nu_2}\right)^\lambda N^{1/p_1'}$. 

Let us prove the estimate in assertion 1 for 
\begin{align}
\label{p2_usl}
a\left(\frac{\nu_1}{\nu_2}\right)^\lambda N^{1/p_1'}<n^{1/2}\le aN^{1/2}.
\end{align}

We set
\begin{align}
\label{tils_2} \tilde s =\Bigl(a^{-1}n^{1/2}N^{-1/p_1'}\Bigr) ^{\frac{1}{1/p_1-1/2}}, \quad s = \lceil \tilde s\rceil.
\end{align}
From \eqref{p2_usl} it follows that $1\le s\le N$.

We again obtain \eqref{dn_nu1_l_e}, where
\begin{align}
\label{a_2_por}
A = \min \{\nu_1 s^{1/2-1/p_1}, \, \nu_2 s^{1/2-1/p_2}\} \stackrel{\eqref{121lp1}, \eqref{p2_usl}, \eqref{tils_2}}{\asymp} \nu_1a\cdot n^{-1/2} N^{1/p_1'},
\end{align}
$$
\|I\|_{X^*\rightarrow l_2^N} \stackrel{\eqref{ixst2n}}{\le} \nu_1^{-1}N^{1/p_1-1/2}.
$$
Hence
$$
A\frac{n^{1/2}}{N^{1/2}}\|I\|_{X^*\rightarrow l_2^N}\lesssim a \cdot \nu_1 n^{-1/2} N^{1/p_1'} n^{1/2}N^{-1/2} \nu_1^{-1}N^{1/p_1-1/2} =a.
$$
Therefore, for some absolute constant $b>0$, we get
$$
\inf_{t\ge 0}\Bigl(\frac{A^2}{2} - 2A^2 \frac{n^{1/2}}{N^{1/2}} \|I\|_{X^* \rightarrow l_2^N}t + ct^2\Bigr) \ge \inf_{t\ge 0}\Bigl(\frac{A^2}{2} - 2A\cdot abt+ct^2\Bigr)\ge \frac{A^2}{4}
$$
if $a>0$ is sufficiently small. Now we apply \eqref{dn_nu1_l_e}, \eqref{a_2_por}, and get $$d^n(\nu_1B^N_{p_1}\cap \nu_2B^N_{p_2}, \, l_q^N) \gtrsim \nu_1 n^{-1/2}N^{1/p_1'}.$$
This completes the proof.
\end{proof}

\section{Application: estimates for the widths of intersections of Sobolev classes}

We consider the example from \cite{vas_int_sob} of the intersection of Sobolev classes on a John domain; the example of the intersection of one-dimensional periodic Sobolev classes (where the smoothness may be noninteger) can be considered similarly.

Let $\Omega \subset \R^d$ be a bounded domain, $1\le p\le \infty$, $r\in \Z_+$. Given $f\in L_1^{{\rm loc}}(\Omega)$, we denote by $\nabla^r f$ the vector of all partial derivatives of order $r$ (in the sense of distributions). If all its components are elements of $L_p(\Omega)$, we say that $f$ is an element of the Sobolev space ${\cal W}^r_p(\Omega)$. The set
$$
W^r_p(\Omega) = \{f\in {\cal W}^r_p(\Omega):\; \|\nabla^r f\|_{L_p(\Omega)}\le 1\}
$$
is the Sobolev class; here $\|\nabla^r f\|_{L_p(\Omega)}$ is the $L_p$-norm of the function $|\nabla^r f(\cdot)|$.

We recall the definition of the John domain.

Let $B_a(x)$ be the Euclidean ball of radius $a$ centered at point $x$.

\begin{Def}
\label{fca} Let $\Omega\subset\R^d$ be a bounded domain, and let $a>0$. We say that $\Omega \in {\bf FC}(a)$ if there is a point $x_*\in \Omega$
such that, for each $x\in \Omega$, there is a number $T(x)>0$ and a curve $\gamma _x:[0, \, T(x)] \rightarrow\Omega$ with the following properties:
\begin{enumerate}
\item $\gamma _x$ has the natural parametrization with respect to the Euclidean norm on $\R^d$,
\item $\gamma _x(0)=x$, $\gamma _x(T(x))=x_*$,
\item $B_{at}(\gamma _x(t))\subset \Omega$ for all $t\in [0, \, T(x)]$.
\end{enumerate}
We say that $\Omega$ satisfies the John condition if $\Omega\in {\bf FC}(a)$ for some $a>0$.
\end{Def}

Let $a>0$, $\Omega \in {\bf FC}(a)$ be a domain in $\R^d$, $s\ge 2$, $r_i\in \Z_+$, $1\le i\le s$, 
\begin{align}
\label{r_order}
r_1>r_2>\dots>r_s, 
\end{align}
$1<p_i\le \infty$, $1\le i\le s$. We set
\begin{align}
\label{m_def} M = \cap _{j=1}^s W^{r_j}_{p_j}(\Omega).
\end{align}

Let
\begin{align}
\label{r_i_p_i} \frac{r_j}{d}-\frac{1}{p_j} < \frac{r_i}{d} -\frac{1}{p_i} \quad \text{for}\; j<i.
\end{align}
This implies that $p_j<p_i$ for $j<i$. The general case when \eqref{r_i_p_i} may fail can be reduced to this particular case similarly as in \cite[\S 3]{vas_int_sob}.

\renewcommand{\proofname}{\bf Proof}

\begin{Trm}
Let $1\le q<\infty$, $s\ge 2$, $1<p_j\le \infty$, $j=1, \, \dots, \, s$, $d\in \N$, $a>0$. Let $\Omega\in {\bf FC}(a)$ be a domain in $\R^d$, let the set $M$ be defined by \eqref{m_def}. Suppose that \eqref{r_order} and \eqref{r_i_p_i} hold, as well as the inequality $\frac{r_s}{d} + \frac 1q-\frac{1}{p_s}>0$. We write $\overline{p} = (p_1, \, \dots, \, p_s)$, $\overline{r} = (r_1, \, \dots, \, r_s)$.
\begin{enumerate}
\item Let $p_j\ge q$, $1\le j\le s$. Then
$$
d^n(M, \, L_q(\Omega)) \underset{\overline{p},q,\overline{r},d,a}{\asymp} n^{-r_1/d}.
$$

\item Let $2\le p_j\le q$, $1\le j\le s$. Then 
$$
d^n(M, \, L_q(\Omega)) \underset{\overline{p},q,\overline{r},d,a}{\asymp} n^{-r_s/d-1/q+1/p_s}.
$$

\item Let $q\ge 2$, $p_j\le q$, $1\le j\le s$, and let $p_1<2$.
\begin{enumerate}
\item If $\frac{r_1-r_s}{d}\le \frac 12-\frac{1}{p_s}$, then
$$
d^n(M, \, L_q(\Omega)) \underset{\overline{p},q,\overline{r},d,a}{\asymp} n^{-r_s/d-1/q+1/p_s}.
$$
\item Let $\frac{r_1-r_s}{d}> \frac 12-\frac{1}{p_s}$ and
$$\theta_1:=\frac{r_1}{d}+\frac 1q -\frac 12 \ne \frac{r_s/d+1/q-1/p_s}{2(r_s/d-r_1/d+1/p_s')}=:\theta_2.$$ Then
$$
d^n(M, \, L_q(\Omega)) \underset{\overline{p},q,\overline{r},d,a}{\asymp} n^{-\min \{\theta_1, \, \theta_2\}}.
$$
\end{enumerate}
\item Let $q\ge 2$, $p_j\ge 2$, $1\le j\le s$, and let $\{j:\; p_j>q\}\ne \varnothing$, $\{j:\; p_j<q\}\ne \varnothing$. For $i$, $j$ such that $p_i<q$, $p_j>q$ we define the numbers $\lambda_{ij}$ by the equations $\frac{1}{q} = \frac{1-\lambda_{ij}}{p_i} + \frac{\lambda_{ij}}{p_j}$, $(i_*, \, j_*)= {\rm argmax}\, _{p_i<q, \, p_j>q} ((1-\lambda_{ij})r_i/d + \lambda_{ij} r_j/d)$. Then
$$
d^n(M, \, L_q(\Omega)) \underset{\overline{p},q,\overline{r},d,a}{\asymp} n^{-(1-\lambda_{i_*j_*})r_{i_*}/d - \lambda_{i_*j_*} r_{j_*}/d}.
$$

\item Let $q=2$, $s=2$, $p_1<2<p_2$, $\frac{r_1-r_2}{d}\ge \frac 12-\frac{1}{p_2}$. We define the number $\lambda$ by the equation $\frac 12=\frac{1-\lambda}{p_1} + \frac{\lambda}{p_2}$. Suppose that
$$
\theta_1:= \frac{r_1}{d} \ne \frac{(1-\lambda)r_1+\lambda r_2}{2\lambda(r_2-r_1)+d}=:\theta_2.
$$
Then
$$
d^n(M, \, L_q(\Omega)) \underset{\overline{p},q,\overline{r},d,a}{\asymp} n^{-\min \{\theta_1, \, \theta_2\}}.
$$
\end{enumerate}
\end{Trm}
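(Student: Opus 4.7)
The idea is to reduce the Gelfand width of the Sobolev intersection $M$ in $L_q(\Omega)$ to the Gelfand widths of weighted intersections of finite-dimensional $l_p$-balls, and then invoke the appropriate one of Theorems \ref{main1}, \ref{main2}, \ref{main3}, \ref{cor1} in each of the five parameter regimes listed in the statement.

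\textbf{Discretization on the John domain.} Using the ${\bf FC}(a)$-condition (as in \cite{vas_int_sob,galeev1}), I would build a dyadic tree decomposition of $\Omega$: at scale $t\in\Z_+$ one obtains $N_t\asymp 2^{td}$ subregions of diameter $\asymp 2^{-t}$. Approximating $f\in{\cal W}^{r_j}_{p_j}(\Omega)$ on each piece by a polynomial of degree $<r_j$ and using the standard Sobolev-Poincar\'{e} bound gives, for the remainder, an $L_q$-control of the form $2^{-t(r_j-d/p_j+d/q)}\|\nabla^{r_j}f\|_{L_{p_j}}$ on each piece; the hypothesis $r_s/d+1/q-1/p_s>0$ together with \eqref{r_i_p_i} makes the dyadic series convergent. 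This embeds $M$ into a direct-sum of blocks; block $t$ is (up to equivalent norms) an intersection $\cap_{j=1}^s\nu_j^{(t)}B_{p_j}^{N_t}$ inside $l_q^{N_t}$ with weights
$$
\nu_j^{(t)}\asymp 2^{-t(r_j-d/p_j+d/q)}.
$$
The monotonicity hypothesis \eqref{r_i_p_i} is exactly the ordering \eqref{ne_vkl} of the paper's finite-dimensional theorems, so no renumeration is necessary.

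\textbf{Upper and lower bounds.} For the upper bound one distributes $n$ coordinates across scales: on scales $t\le t_0$ (with $2^{t_0d}\asymp n$) one applies the relevant finite-dimensional Gelfand-width estimate, while on scales $t>t_0$ one uses the trivial diameter bound. For the lower bound one tests against a scaled copy of the finite-dimensional intersection supported on $N_{t_0}$ disjoint pieces at the optimal scale $t_0=t_0(n)$. Case (1) of the theorem corresponds to the trivial finite-dimensional estimate $\inf_j\nu_j^{(t)}N_t^{1/q-1/p_j}$ from the introduction, whose minimum is achieved at $j=1$ by \eqref{r_i_p_i}; case (2) uses $\inf_j\nu_j^{(t)}$, minimized at $j=s$; case (3) invokes Theorem \ref{main1}, whose two competing quantities $\nu_1 n^{-1/2}N^{1/p_1'}$ and $\nu_r$ yield the two exponents $\theta_1,\theta_2$; case (4) invokes Theorem \ref{main2} and the minimizer over $(i,j)$ in the finite-dimensional formula corresponds to $(i_*,j_*)$; case (5) invokes Theorem \ref{main3}, and the hypothesis $\frac{r_1-r_2}{d}\ge\frac{1}{2}-\frac{1}{p_2}$ is, after substitution of $\nu_j^{(t)}$ and $N_t$, exactly the condition $\nu_1^{(t)}/\nu_2^{(t)}\le N_t^{1/p_1-1/2}$, placing us in regime 1 of Theorem \ref{main3}, whose two competing rates again yield $\theta_1$ and $\theta_2$.

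\textbf{Main obstacle.} The bookkeeping of the discretization (with constants depending only on $\overline{p},q,\overline{r},d,a$) and the equivalence of the continuous Sobolev norm with its dyadic discretization is essentially the same as in \cite{vas_int_sob}; the novel step is the matching between the optimal scale $t_0(n)$ and the admissibility window of the finite-dimensional theorems. The main difficulty lies in cases (3b) and (5): in those two cases the finite-dimensional width is itself a minimum of two expressions that cross over at an internal threshold in $n$, so after optimizing the scale distribution one must determine which of the two branches dominates, which is precisely what produces $\min\{\theta_1,\theta_2\}$; the strict inequality $\theta_1\ne\theta_2$ is needed to rule out a logarithmic correction at the crossover. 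One also has to verify in case (5) that the hypothesis $n\le a_0N_{t_0}$ of Theorem \ref{main3} is actually satisfied at the optimal scale, which follows from the choice $2^{t_0d}\asymp n$ up to a harmless constant.
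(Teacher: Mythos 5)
Your proposal is correct and takes essentially the same route as the paper: the paper likewise discretizes $M$ as in \cite{vas_int_sob} into scale blocks $\cap_{j=1}^s 2^{-m(r_j/d+1/q-1/p_j)}B^{2^m}_{p_j}$, applies Theorems \ref{main1}, \ref{main2}, \ref{main3} (and the simple estimates) per scale in the respective cases, and in cases 3(b) and 5 reduces the scale optimization to minimizing the affine function $h(t)=t\left(\frac{r_1}{d}+\frac1q-1\right)+\frac12$ over $[1,\,T_*]$, whose endpoint values are exactly $\theta_1$ and $\theta_2$, the hypothesis $\theta_1\ne\theta_2$ excluding a logarithmic correction at the crossover. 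Your identification of the weights $\nu_j^{(t)}$, of \eqref{r_i_p_i} together with \eqref{r_order} as giving \eqref{ne_vkl}, and of the case-5 hypothesis $\frac{r_1-r_2}{d}\ge\frac12-\frac1{p_2}$ as placing each scale in regime 1 of Theorem \ref{main3} all agree with the paper's computation.
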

\begin{proof}
We argue similarly as in \cite{vas_int_sob}, replacing the Kolmogorov widths by the Gelfand widths. Assertions 1, 2 and 4 are similar to assertions 1, 2 and 4 of Theorem 1 from \cite{vas_int_sob}. Assertion 3(a) is also similar to \cite[Theorem 1, assertion 2]{vas_int_sob}; indeed, by Theorem \ref{main1} for $n\le 2^{m-1}$ we have
$$
d^n\Bigl(\cap _{j=1}^s 2^{-m(r_j/d+1/q-1/p_j)}B^{2^m}_{p_j}, \, l_q^{2^m}\Bigr) \underset{p_1}{\asymp} $$$$\asymp\min \{2^{-m(r_1/d+1/q-1/p_1)}n^{-1/2}2^{m/p_1'}, \, 2^{-m(r_s/d+1/q-1/p_s)}\} = 2^{-m(r_s/d+1/q-1/p_s)}.
$$
In assertions 3(b) and 5 we argue as in the proof of part 5 of Theorem 1 from \cite{vas_int_sob}; here we apply, correspondingly, Theorems \ref{main1} and \ref{main3}. The problem is reduced to searching the minimum point $t_*$ of the function $h:[1, \, T_*] \rightarrow \R$, where $h(t) = t\left( \frac{r_1}{d}+\frac 1q-1\right) + \frac 12$,
$$T_*=\begin{cases}\frac{1}{2(r_s/d-r_1/d+1/p_s')} & \text{in part 3(b)},\\
\frac{1}{2\lambda(r_2/d-r_1/d)+1} & \text{in part 5}.\end{cases}
$$
Since $\theta_1\ne \theta_2$, the minimum point of $h$ is unique and $$d^n(M, \, L_q(\Omega)) \underset{\overline{p},q,\overline{r},d,a}{\asymp} n^{-h(t_*)} = n^{-\min \{\theta_1, \, \theta_2\}}.$$
This completes the proof.
\end{proof}

\section{Some generalizations of Lemma \ref{low_est_lem}}

Let $H \subset S_N$ be a subgroup acting transitively on $\{1, \, \dots, \, N\}$ (i.e., for all $i$, $j\in \{1, \, \dots, \, N\}$ there is a permutation $h\in H$ such that $h(i)=j$), $G = H\times \{-1, \, 1\}^N$, an element $g\in G$ is acting on $x=(x_1, \, \dots, \, x_N)\in \R^N$ according to formula \eqref{dx_dejstv}. In \cite{mal_rjut1} order estimates for the Kolmogorov widths of sets invariant with respect to $G$ in the space $l_q^N$ for $1\le q\le 2$ were obtained.

Let $\|\cdot \|_X$ be a norm on $\R^N$ invariant with respect to $G$; i.e., $\|gx\|_X = \|x\|_X$, $x\in X$, $g\in G$. By $\langle\cdot, \, \cdot\rangle$ we denote the standard inner product on $\R^N$.

Let $\hat x\ne 0$, $V = {\rm conv}\, \{g\hat x\}_{g\in G}$. Let $b=(b_1, \, \dots, \, b_N)\in \R^N$ be such that $x \stackrel{x^*_b}{\mapsto} \langle b, \, x\rangle$ is the supporting functional at $\hat x$; i.e., $\|x^*_b\|_{X^*}=1$ and $\langle b, \, \hat x\rangle = \|\hat x\|_X$. Then $x \mapsto \langle gb, \, x\rangle$ is the supporting functional at $g\hat x$.

\begin{Sta}
Let $0\le n\le N$. Then
\begin{align}
\label{dnv2}
(d_n(V, \, X))^2 \ge \inf _{t\ge 0}\Bigl( \frac{\|\hat x\|^2_X}{2} -2\|\hat x\|_X \cdot \frac{\|b\|_{l_2^N} n^{1/2}}{N^{1/2}}\|I\|_{X \rightarrow l_2^N}t +ct^2\Bigr),
\end{align}
where $c>0$ is the absolute constant from Lemma {\rm \ref{qnorm}}.
\end{Sta}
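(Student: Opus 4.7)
The plan is to repeat the Gluskin-style averaging argument from the proof of Lemma \ref{low_est_lem}, working now with the Kolmogorov width of an arbitrary $G$-invariant body $V$ and with a general $G$-invariant norm $\|\cdot\|_X$. The two essential ingredients are Lemma \ref{qnorm} applied term by term, and the covariance identity
\[
\frac{1}{|G|}\sum_{g\in G}(gb)(gb)^T = \frac{\|b\|_{l_2^N}^2}{N}\,I,
\]
which packages sign-averaging together with transitivity of $H$.

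I would fix a subspace $L\subset \R^N$ with $\dim L\le n$ and, for each $g\in G$, choose $y_g\in L$ realizing $\mathrm{dist}_X(g\hat x, L)$. Because $v\mapsto \mathrm{dist}_X(v,L)$ is a seminorm, its supremum over $V=\mathrm{conv}\,\{g\hat x\}_{g\in G}$ is attained at an extreme point, so
\[
\sup_{v\in V}\mathrm{dist}_X(v,L)^2 = \max_{g\in G}\|g\hat x-y_g\|_X^2 \ge \frac{1}{|G|}\sum_{g\in G}\|g\hat x-y_g\|_X^2.
\]
Applying Lemma \ref{qnorm} with $x=g\hat x$, $h=-y_g$, and then using the $G$-invariance $\|g\hat x\|_X = \|\hat x\|_X$ together with the hypothesis that the supporting functional at $g\hat x$ is $y\mapsto \langle gb, y\rangle$, the right-hand side is bounded below by
\[
\frac{\|\hat x\|_X^2}{2} - 2\|\hat x\|_X\cdot \frac{1}{|G|}\sum_{g\in G}\langle gb, y_g\rangle + c\cdot \frac{1}{|G|}\sum_{g\in G}\|y_g\|_X^2.
\]

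The main step is to bound the cross term $S:=\frac{1}{|G|}\sum_g\langle gb, y_g\rangle$. Since $y_g\in L$, writing $P$ for the $l_2^N$-orthogonal projection onto $L$ gives $\langle gb, y_g\rangle = \langle Pgb, y_g\rangle$, and two applications of Cauchy--Schwarz (first inside $l_2^N$, then for the average over $g$) yield
\[
|S|^2 \le \Bigl(\frac{1}{|G|}\sum_{g\in G}\|Pgb\|_{l_2^N}^2\Bigr)\cdot\Bigl(\frac{1}{|G|}\sum_{g\in G}\|y_g\|_{l_2^N}^2\Bigr).
\]
For the first factor, $\|Pgb\|_{l_2^N}^2 = \mathrm{tr}(P(gb)(gb)^T)$, so I must evaluate $\frac{1}{|G|}\sum_g(gb)(gb)^T$. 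Averaging over the sign coordinates of $G$ annihilates every off-diagonal entry, leaving $\mathrm{diag}(b_{h(1)}^2,\dots,b_{h(N)}^2)$; transitivity of $H$ on $\{1,\dots,N\}$ combined with orbit--stabilizer yields $\frac{1}{|H|}\sum_{h\in H}b_{h(i)}^2 = \|b\|_{l_2^N}^2/N$ for every $i$, giving the displayed covariance identity. Consequently
\[
\frac{1}{|G|}\sum_{g\in G}\|Pgb\|_{l_2^N}^2 = \frac{\|b\|_{l_2^N}^2}{N}\,\mathrm{tr}\,P \le \frac{\|b\|_{l_2^N}^2\,n}{N}.
\]
For the second factor I use $\|y_g\|_{l_2^N}\le \|I\|_{X\to l_2^N}\|y_g\|_X$.

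Setting $t^2:=\frac{1}{|G|}\sum_g\|y_g\|_X^2$ and inserting these bounds produces
\[
\sup_{v\in V}\mathrm{dist}_X(v,L)^2 \ge \frac{\|\hat x\|_X^2}{2} - 2\|\hat x\|_X\cdot\frac{\|b\|_{l_2^N}n^{1/2}}{N^{1/2}}\|I\|_{X\to l_2^N}\,t + c\,t^2,
\]
which is at least the infimum of the same quadratic over $t\ge 0$. Taking infimum over subspaces $L$ of dimension at most $n$ then yields \eqref{dnv2}. The only non-routine step is verifying the covariance identity for $\frac{1}{|G|}\sum_g(gb)(gb)^T$ (where one genuinely needs transitivity of $H$, not just the sign group); every other ingredient is a direct transcription of the Gluskin-type computation already used to prove Lemma \ref{low_est_lem}.
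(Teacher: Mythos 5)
Your proof is correct, and it follows the same global Gluskin-type skeleton as the paper --- averaging $\|g\hat x-y_g\|_X^2$ over $G$, applying Lemma \ref{qnorm} with the supporting functionals $y\mapsto\langle gb, \, y\rangle$ and the $G$-invariance $\|g\hat x\|_X=\|\hat x\|_X$, then optimizing the resulting quadratic in $t=\bigl(\frac{1}{|G|}\sum_g\|y_g\|_X^2\bigr)^{1/2}$ --- but you execute the key step, the bound on the cross term $\frac{1}{|G|}\sum_{g\in G}\langle gb, \, y_g\rangle$, by a different mechanism. The paper, following Gluskin, passes to the function space $L_2(G)$: it sets $\varphi_i(g)=(gb)_i$, $z_i(g)=(y_g)_i$, projects orthogonally in $L_2(G)$ onto $Z=\mathrm{span}\,\{z_i\}_{i=1}^N$ (of dimension at most $n$ because every $y_g$ lies in $L$), and uses the mutual orthogonality of the $\varphi_i$ together with $\|\varphi_i\|^2_{L_2(G)}=\|b\|^2_{l_2^N}/N$. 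You instead stay in $\R^N$: you insert the $l_2^N$-orthogonal projection $P$ onto $L$ via $\langle gb, \, y_g\rangle=\langle Pgb, \, y_g\rangle$ and evaluate $\frac{1}{|G|}\sum_{g}\|Pgb\|^2_{l_2^N}=\frac{\|b\|^2_{l_2^N}}{N}\,\mathrm{tr}\,P\le \frac{n\|b\|^2_{l_2^N}}{N}$ through the covariance identity $\frac{1}{|G|}\sum_{g}(gb)(gb)^T=\frac{\|b\|^2_{l_2^N}}{N}I$. The two computations are dual to one another and rest on exactly the same structural facts (sign-averaging annihilates off-diagonal terms; transitivity of $H$, via orbit--stabilizer, equidistributes the diagonal), so nothing essential changes; your version is slightly more self-contained, since ${\rm rk}\, P\le n$ is immediate from $\dim L\le n$, whereas the paper must separately observe that the coordinate functions $z_i$ span a subspace of $L_2(G)$ of dimension at most $n$. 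One minor remark: your extreme-point argument for $v\mapsto \mathrm{dist}_X(v, \, L)$ being maximized over $V$ at some $g\hat x$ is harmless but unnecessary --- since each $g\hat x\in V$, the trivial inequality $\sup_{v\in V}\mathrm{dist}_X(v, \, L)\ge \max_{g\in G}\|g\hat x-y_g\|_X$ is all the proof requires.
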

\begin{proof}
Let $L\subset \R^N$ be a subspace of dimension at most $n$, and let $y_g$ be the nearest to $g\hat x$ element of $L$ with respect to $\|\cdot\|_X$.

As in Lemma \ref{low_est_lem}, we get from \eqref{quad_norm} that
\begin{align}
\label{dnvx2_low}
(d_n(V, \, X))^2 \ge \frac{\|\hat x\|^2_X}{2} -2\|\hat x\|_X  \cdot \frac{1}{|G|} \sum \limits _{g\in G} \langle gb, \, y_g \rangle + \frac{c}{|G|} \sum \limits _{g\in G}\|y_g\|^2_X.
\end{align}

As in \cite{gluskin1}, we consider the space $L_2(G)$ with the inner product $\langle \varphi, \, \psi \rangle_{L_2(G)} = \frac{1}{|G|} \sum \limits _{g\in G} \varphi(g)\psi(g)$, and set $\varphi_i(g) = (gb)_i$, $z_i(g)=(y_g)_i$, $i=1, \, \dots, \, N$. Let $Z = {\rm span}\, \{z_i\}_{i=1}^N \subset L_2(G)$, and let $P$ be the orthogonal projection onto $Z$. Then $\dim Z\le n$. Similar to \cite{gluskin1}, we get
$$
\Bigl| \frac{1}{|G|} \sum \limits _{g\in G} \langle gb, \, y_g \rangle\Bigr| \le \Bigl(\sum \limits _{i=1}^N \|P\varphi_i\|^2\Bigr)^{1/2} \Bigl(\sum \limits _{i=1}^N \|z_i\|^2\Bigr)^{1/2}.
$$
Notice that for $i\ne j$ we have $\langle \varphi_i, \, \varphi_j\rangle_{L_2(G)} = 0$, and for $i=j$,
$$
\langle \varphi_i, \, \varphi_i\rangle_{L_2(G)} = \frac{1}{|G|} \sum \limits _{g\in G} (gb)_i^2 = \frac{1}{|H|} \sum \limits _{h\in H} b_{h(i)}^2 = \frac{\|b\|^2_{l_2^N}}{N}
$$
(the last equality holds since $H$ acts transitively on $\{1, \, \dots, \, N\}$). Since $\dim Z\le n$, the Hilbert--Schmidt norm of $P$ is at most $n^{1/2}$. Hence
$$
\Bigl(\sum \limits _{i=1}^N \|P\varphi_i\|^2\Bigr)^{1/2} \le \frac{n^{1/2}\|b\|_{l_2^N}}{N^{1/2}}.
$$
Further,
$$
\Bigl(\sum \limits _{i=1}^N \|z_i\|^2\Bigr)^{1/2} = \Bigl(\frac{1}{|G|}\sum \limits _{g\in G} \|y_g\|^2_{l_2^N}\Bigr)^{1/2} \le \|I\|_{X \rightarrow l_2^N}\Bigl(\frac{1}{|G|}\sum \limits _{g\in G} \|y_g\|^2_X\Bigr)^{1/2}.
$$
This together with \eqref{dnvx2_low} yields \eqref{dnv2}.
\end{proof}

\begin{Cor}
Let $\|\cdot\|_Y$ be a norm on $\R^N$ invariant with respect to $G$, and let $\hat x\in \R^N$, $\|\hat x\|_Y=1$. Then there is an absolute constant $\alpha>0$ such that, for $n \le \alpha \frac{N}{\|b\|^2_{l_2^N}\|I\|^2_{X \rightarrow l_2^N}}$,
$$
d^n(B_{X^*}, \, Y^*)=d_n(B_Y, \, X) \ge \frac{\|\hat x\|_X}{4}.
$$
\end{Cor}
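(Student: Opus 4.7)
The plan is to derive this corollary as a direct consequence of the proposition immediately preceding it, combined with the duality result Theorem~\ref{dual_teor}. The key observation is that since $\|\cdot\|_Y$ is $G$-invariant and $\|\hat x\|_Y=1$, we have $\|g\hat x\|_Y = 1$ for every $g\in G$, so $V = \mathrm{conv}\,\{g\hat x\}_{g\in G} \subset B_Y$. By monotonicity of Kolmogorov widths in the set, this yields $d_n(B_Y, X) \ge d_n(V, X)$, and the proposition provides a lower bound on $d_n(V, X)$.

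First I would write down the bound from the proposition: $(d_n(V, X))^2 \ge \inf_{t\ge 0} Q(t)$, where
$$
Q(t) = \frac{\|\hat x\|_X^2}{2} - 2\|\hat x\|_X\, \frac{\|b\|_{l_2^N} n^{1/2}}{N^{1/2}}\|I\|_{X\to l_2^N}\, t + ct^2.
$$
Next I would explicitly minimize $Q$ over $t\ge 0$. The minimizer is $t_* = \|\hat x\|_X \|b\|_{l_2^N} n^{1/2}\|I\|_{X\to l_2^N}/(c N^{1/2})$, and plugging back gives
$$
\inf_{t\ge 0} Q(t) = \frac{\|\hat x\|_X^2}{2} - \frac{\|\hat x\|_X^2\, \|b\|_{l_2^N}^2\, \|I\|_{X\to l_2^N}^2\, n}{cN}.
$$

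The third step is to choose $\alpha := c/4$. Under the hypothesis $n \le \alpha\, N/(\|b\|_{l_2^N}^2 \|I\|_{X\to l_2^N}^2)$ the subtracted term is at most $\|\hat x\|_X^2/4$, so $\inf_{t\ge 0} Q(t) \ge \|\hat x\|_X^2/4$. Taking square roots gives $d_n(V, X) \ge \|\hat x\|_X/2$, and combining with $d_n(B_Y, X) \ge d_n(V, X)$ plus Theorem~\ref{dual_teor} (applied with the roles of the spaces $X,Y$ being $\R^N$ equipped with $\|\cdot\|_Y$ and $\|\cdot\|_X$ respectively, so that $B_{X^*}$ is the dual ball and the duality $d^n(B_{X^*}, Y^*) = d_n(B_Y, X)$ is exactly the claimed identity) finishes the proof.

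There is no real obstacle here; the work has already been done in the proposition. The only minor care needed is to verify the quadratic minimization in closed form and to track that the constant $\alpha$ is absolute (it depends only on the constant $c$ from Lemma~\ref{qnorm}, which is itself absolute). No step requires anything beyond elementary calculus and monotonicity of widths.
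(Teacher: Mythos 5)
Your proof is correct and is exactly the argument the paper intends: the Corollary is stated without proof as an immediate consequence of the Proposition, and your three steps (the inclusion $V=\mathrm{conv}\,\{g\hat x\}_{g\in G}\subset B_Y$ by $G$-invariance, explicit minimization of the quadratic with $\alpha=c/4$, and the Ioffe--Tikhomirov duality $d_n(B_Y,\,X)=d^n(B_{X^*},\,Y^*)$) fill in precisely the omitted routine details. Nothing further is needed.
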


\begin{Biblio}
\bibitem{galeev1} E.M.~Galeev, ``The Kolmogorov diameter of the intersection of classes of periodic
functions and of finite-dimensional sets'', {\it Math. Notes},
{\bf 29}:5 (1981), 382--388.

\bibitem{galeev2} E.M. Galeev,  ``Kolmogorov widths of classes of periodic functions of one and several variables'', {\it Math. USSR-Izv.},  {\bf 36}:2 (1991),  435--448.

\bibitem{garn_glus} A.Yu. Garnaev and E.D. Gluskin, ``On widths of a Euclidean ball'', {\it Dokl.Akad. Nauk SSSR}, {bf 277}:5 (1984), 1048--1052 [Sov. Math. Dokl. 30 (1984), 200--204]

\bibitem{gluskin1} E.D. Gluskin, ``On some finite-dimensional problems of the theory of diameters'', {\it Vestn. Leningr. Univ.}, {\bf 13}:3 (1981), 5--10 (in Russian).

\bibitem{bib_gluskin} E.D. Gluskin, ``Norms of random matrices and diameters
of finite-dimensional sets'', {\it Math. USSR-Sb.}, {\bf 48}:1
(1984), 173--182.

\bibitem{ioffe_tikh} A.D. Ioffe, V.M. Tikhomirov, ``Duality of convex functions and extremum problems'', {\it Russian Math. Surveys}, {\bf 23}:6 (1968), 53--124.

\bibitem{kashin_oct} B.S. Kashin, ``The diameters of octahedra'', {\it Usp. Mat. Nauk} {\bf 30}:4 (1975), 251--252 (in Russian).

\bibitem{bib_kashin} B.S. Kashin, ``The widths of certain finite-dimensional
sets and classes of smooth functions'', {\it Math. USSR-Izv.},
{\bf 11}:2 (1977), 317--333.

\bibitem{kashin_matr} B.S. Kashin, ``On some properties of matrices of bounded operators from the space $l^n_2$ into $l^m_2$'', {\it Izv. Akad. Nauk Arm. SSR, Mat.} {\bf 15} (1980), 379--394 (in Russian).

\bibitem{k_p_s} A.N. Kolmogorov, A.A. Petrov, Yu.M. Smirnov, ``A formula of Gauss in the theory of the method of least squares'', {\it Izvestiya Akad. Nauk SSSR. Ser. Mat.} {\bf 11} (1947), 561--566 (in Russian).

\bibitem{mal_rjut} Yu.V. Malykhin, K.S. Ryutin, ``The Product of Octahedra is Badly Approximated in the $l_{2,1}$-Metric'', {\it Math. Notes}, {\bf 101}:1 (2017), 94--99.

\bibitem{mal_rjut1} Yu.V. Malykhin, K.S. Ryutin, ``Widths and rigidity of unconditional sets and
random vectors'', {\it Izvestiya: Mathematics}, {\bf 89}:2 (2025) (to appear).

\bibitem{pietsch1} A. Pietsch, ``$s$-numbers of operators in Banach space'', {\it Studia Math.},
{\bf 51} (1974), 201--223.

\bibitem{stech_poper} S.B. Stechkin, ``On the best approximations of given classes of functions by arbitrary polynomials'', {\it Uspekhi Mat. Nauk}, {\bf 9}:1(59) (1954) 133--134 (in Russian).

\bibitem{stesin} M.I. Stesin, ``Aleksandrov diameters of finite-dimensional sets
and of classes of smooth functions'', {\it Dokl. Akad. Nauk SSSR},
{\bf 220}:6 (1975), 1278--1281 [Soviet Math. Dokl.].

\bibitem{vas_ball_inters} A. A. Vasil'eva, ``Kolmogorov widths of intersections of finite-dimensional balls'', {\it J. Compl.}, {\bf 72} (2022), article 101649.

\bibitem{vas_mix_sev} A. A. Vasil'eva, ``Kolmogorov widths of an intersection of a family of balls in a mixed norm'', {\it J. Appr. Theory}, {\bf 301} (2024), article 106046.

\bibitem{vas_int_sob} A. A. Vasil'eva, ``Kolmogorov widths of an intersection of a finite family of Sobolev classes'', {\it  Izv. Math.}, {\bf 88}:1 (2024), 18--42.

\end{Biblio}
\end{document}